\documentclass[12pt]{article}

\usepackage{amsmath,amssymb,amsthm,mathtools,mathrsfs,latexsym}
\usepackage{graphicx}
\usepackage{xcolor}
\usepackage{tikz}
\usepackage[linkcolor=blue,colorlinks=true,urlcolor=red,bookmarksopen=true]{hyperref}
\usepackage{newclude}
\usepackage{imakeidx}
\makeindex

\textheight 217mm \textwidth 164mm
\setlength{\arraycolsep}{0.5mm} \headsep=6mm \headheight=5mm
\topmargin=0mm \oddsidemargin=0pt \evensidemargin=0pt
\allowdisplaybreaks

\pagenumbering{arabic} \numberwithin{equation}{section}

\newtheorem{thm}{Theorem}[section]

\newtheorem{lema}{Lemma}[section]
\newtheorem{prop}{Proposition}[section]

\newtheorem{rmk}{Remark}[section]

\renewcommand{\Pi}{\mathcal{P}}

\renewcommand{\Xi}{\mathcal{X}}

\newcommand{\R}{\mathbb{R}}
\renewcommand{\S}{\mathbb{S}}
\newcommand{\I}{\mathbb{I}}
\newcommand{\D}{\mathbb{D}}

\newcommand{\T}{\mathbb{T}}

\newcommand{\ub}{\bold{u}}
\newcommand{\xb}{\bold{x}}
\newcommand{\nb}{\bold{n}}

\let\div\relax
\DeclareMathOperator*{\div}{div}
\DeclareMathOperator*{\curl}{curl}

\DeclarePairedDelimiter\abs{\lvert}{\rvert}
\DeclarePairedDelimiter\norm{\lVert}{\rVert}
\makeatletter
\let\oldabs\abs
\def\abs{\@ifstar{\oldabs}{\oldabs*}}
\let\oldnorm\norm
\def\norm{\@ifstar{\oldnorm}{\oldnorm*}}
\makeatother

\usepackage{scalerel}
\usepackage[usestackEOL]{stackengine}
\def\dashint{\,\ThisStyle{\ensurestackMath{
\stackinset{c}{.2\LMpt}{c}{.5\LMpt}{\SavedStyle-}{\SavedStyle\phantom{\int}}
}
\setbox0=\hbox{$\SavedStyle\int\,$}\kern-\wd0}\int}

\newcommand{\br}[1]{\left(#1\right)}

\newcommand{\set}[1]{\left\{#1\right\}}

\usepackage{adjustbox}
\usepackage{wrapfig}
\usepackage{enumitem}
\usepackage{protosem}

\title{Free boundary value problem for the radial symmetric\\ compressible isentropic Navier-Stokes equations\\
with density-dependent viscosity}
\date{}
\author{
\bf\large Xiangdi Huang$^a$, Weili Meng$^a$, Anchun Ni$^a$*\\
\small a. Institute of Mathematics, Academy of Mathematics and Systems Sciences\\
\small Chinese Academy of Sciences, Beijing 100080, P.R.China;\\
\footnote{*Email addresses: xdhuang@amss.ac.cn\,\, (X. Huang); mengweili@amss.ac.cn\,\, (W. Meng); varnothing@outlook.com\, \, (A. Ni); } }
\begin{document}
\maketitle

\begin{abstract}
This paper is devoted to the study of free-boundary-value problem of the compressible Naiver-Stokes system with density-dependent viscosities $\mu=const>0,\lambda=\rho^\beta$ which was first introduced by Vaigant-Kazhikhov \cite{1995 Vaigant-Kazhikhov-SMJ} in 1995. By assuming the endpoint case $\beta=1$ in the radially spherical symmetric setting, we prove the (a priori) expanding rate of the free boundary is algebraic for multi-dimensional flow, and particularly establish the global existence of strong solution of the two-dimensional system for any large initial data. This also improves the previous work of Li-Zhang \cite{2016 Li-Zhang-JDE} where they proved the similar result for $\beta>1$. The main ingredients of this article is making full use of the geometric advantage of domain as well as the critical space dimension two. \\[4mm]
{\bf Keywords:} Compressible Navier-Stokes equations; Free boundary; Strong solutions.\\[4mm]
{\bf Mathematics Subject Classifications (2010):} 76W15, 35Q30.\\[4mm]
\end{abstract}

\section{Introduction}
We study the free-boundary-value problem of the following multi-dimensional compressible viscous fluids:
\begin{equation}
\label{CNS}
	\begin{dcases}
		\rho_t +\div (\rho \ub )=0, & \text{ in }\Omega_t,\\
		(\rho \ub)_t +\div(\rho \ub \otimes \ub)=\div\S, & \text{ in }\Omega_t,
	\end{dcases}
\end{equation}
with rheological assumption and the stress-free boundary condition:
\begin{equation}
	\S=-P(\rho) \I +2\mu \D \ub +\lambda (\rho) \div \ub \I,
\end{equation}
\begin{equation}
	P(\rho)=\rho^\gamma, \mu=const, \lambda(\rho)=\rho, \gamma>1, 
\end{equation}
\begin{equation}
\label{stress free}
	\S \cdot \nb=0,  \text{ on }\Gamma_t,
\end{equation}
where $\Gamma_t $ is the free surface of domain $\Omega_t\subset \R^d(d=2,3)$. Here $\rho (\xb,t), \ub(\xb,t), P(\xb,t)$ stand for the fluid density, velocity and pressure respectively, $\S=\S(\rho,\ub)$ is the total stress tensor and $\mu,\lambda$ are the bulk and shear viscosity coefficients which may depend on the density. 

The compressible Navier-Stokes equations with density-dependent viscosity $\mu=\mu(\rho), \lambda=\lambda(\rho)$ have attracted a lot of attention due to the physical importance and rich phenomena, which can be derived from the fluid-dynamical approximation to the Boltzmann equation. A special case  $\mu(\rho)=\rho,\lambda(\rho)=0,\gamma=2$ and $d=2$ expresses  exactly the viscous Saint-Venant system describing the motion for shallow water where $\rho,\ub$ represent the height and the average velocity, see \cite{2001 Gerbeau-Perthame-DCDSSB, 2007 Marche-EJMB} for details. The system considered in this paper follows the model proposed by Vaigant-Kazhikhov \cite{1995 Vaigant-Kazhikhov-SMJ} assuming that  $\lambda(\rho)=\rho^\beta$ and $\mu$ is a positive constant.

The research on the well-posedness and long-time dynamic behavior of compressible viscous fluids with constant viscosity has significant progress in recent years: see \cite{2024 Huang-Li-Zhang-Preprint1,  2012 Huang-Li-Xin-CPAM, 1980 Matsumura-Nishida-JMKU, 2022 MRRS-AM-1, 2022 MRRS-AM-2, 2008 Razanova-JDE,  1998 Xin-CPAM, 2013 Xin-Yan-CMP, 2023 Yu-MMAS}. We briefly recall some results about the strong solution in multi-dimensional cases: Matsumura-Nishida \cite{1980 Matsumura-Nishida-JMKU} establish the global well-posedness of classical solution in $3$D when the initial regular data is close to a equilibrium state, and the case when the initial data with small energy but possible large oscillations and vacuum is proved by Huang-Li-Xin \cite{2012 Huang-Li-Xin-CPAM}, the case when density is large is proved by Yu \cite{2023 Yu-MMAS} and Huang-Li-Zhang \cite{2024 Huang-Li-Zhang-Preprint1}. When the sufficiently small or large assumptions fail, Xin \cite{1998 Xin-CPAM} and later \cite{2008 Razanova-JDE,2013 Xin-Yan-CMP} prove the finite-time blowup from strongly vanishing smooth data. Very recently, Merle-Rapha\"el-Rodnianski-Szeftel \cite{2022 MRRS-AM-1, 2022 MRRS-AM-2} construct smooth implosions without vacuum for both Euler  and Navier-Stokes equations in 2D and 3D cases, see \cite{2021 Biasi-CNSNS, 2023 CGSS-preprint} for the later improvements.

As for  the density-dependent viscosity flow $\mu=\mu (\rho),\lambda=\lambda(\rho)$, the investigation becomes complicate due to the high nonlinearity and possible degeneracy. Vassuer-Yu \cite{2016 Vasseur-Yu-IM}, and  Li-Xin \cite{2015 Li-Xin-Preprint} independently prove the global existence of the weak solution for the density-degenerate case, based on the significant estimates by Bresch-Desjardins \cite{2003 Bresch-Desjardins-CMP} and Mellet-Vassuer \cite{2007 Mellet-Vasseur-CPDE}, when the global well-posedness of classical solutions are still largely open. 

Compared to the density-degenerate case, the regularity of the solution of  system with $\lambda=\rho^\beta,\mu>0$ is more expectable because of the uniform parabolicity. See \cite{2016 Huang-Li-JMPA, 1995 Vaigant-Kazhikhov-SMJ} for the global existence of classical solution in multidimensional cases and under the fixed boundary assumption, and \cite{2024 Huang-Li-Zhang-Preprint1} for the case $\beta>1$ in the extra spherically symmetric setting and under the Dirichlet boundary condition.

When it comes to the free-boundary-value problem in our setting, many new challenges appear because of the influence of the vacuum state. The global regularity  of three-dimensional system with small amplitude and non-vacuum in fluid region is established,  under the stress-balanced boundary condition by surface tension and/or exterior pressure \cite{1992 Solonnikov-Tani, 1993 Zajaczkowski-DM, 1994 Zajaczkowski-JMA}. Without the smallness assumption, the global existence is expectable in one-dimensional or spherically symmetric cases. See \cite{2008 Guo-Jiu-Xin-JMA, 2005 Jiang-Xin-Zhang-MAA} for more details.

In our paper, we consider the system in radially spherically symmetric setting:
\begin{equation}
	\rho(\xb,t)=\rho (r,t),\quad \ub(\xb,t)=u(r,t)\frac{\xb}{r},\quad r=|\xb|.
\end{equation}
And then the stress-free boundary value problem \eqref{CNS}-\eqref{stress free} is changed to
\begin{equation}
\label{radial CNS}
	\begin{dcases}
		\rho_t +(\rho u)_r +\frac {d-1}r(\rho u)=0,\\
		(\rho u)_t + (\rho u^2)_r +\frac {d-1}r \rho u^2 + (\rho^\gamma)_r= \br{(2\mu +\rho)(u_r +\frac {d-1}r u) }_r,
	\end{dcases}
\end{equation}
with the initial data
\begin{align}
    \rho(r,0)=\rho_0(r),\quad u(r,0)=u_0(r), \quad 0\leq r<a_0,
\end{align}
and the boundary condition:
\begin{equation}
\label{stress free condition}
	u(0,t)=0,\quad \br{\rho^\gamma-(2\mu+\rho)(u_r +\frac {d-1}r u)}(a(t),t)=0,
\end{equation}
where $a(t)$ satisfies
\begin{equation}
\label{at}
	a'(t)=u(a(t),t),
\end{equation}
 corresponding to the free surface of $\Omega_t$, that is:
\begin{equation*}
	\Omega_t =\set{(r,t)| 0\le r\le a(t), 0\le t\le T}.
\end{equation*}

The well-posedness and Lagrangian structure to the spherically symmetric compressible flow with density-dependent viscosity and stress-free boundary  has been studied in recent years. For the density-degenerate viscosity $\mu=\rho, \lambda=0$, Guo-Li-Xin \cite{2012 Guo-Li-Xin-CMP} prove the global existence of the entropy weak solution, and they prove also the non-formation of vacuum state in finite time and that the free surface expands at an algebraic rate. As for the case when $\mu$ is a positive constant and $\lambda=\rho^\beta,\beta>1$, Li-Zhang \cite{2016 Li-Zhang-JDE} establish the global well-posedness of strong solution and the algebraic expanding rate.

Our goal is to establish the well-posedness result and analyze the dynamic behavior in the endpoint case $\beta=1$, where the first result can be stated as following:

\begin{thm}
\label{expanding rate}
Suppose $d=2$ or $3$, and $(\rho, \ub ,a)$ is a global strong solution of the free-boundary-value problem \eqref{radial CNS}-\eqref{at}, then the expanding rate of $a(t)$ is algebraic. Indeed, we have
\begin{equation}
\label{a bound}
	\begin{split}
		a(t)\ge \begin{dcases}
			C(1+t)^{\frac{1}{d\gamma}}, & \gamma>1+\frac 1d,\\
			C\br{\frac{1+t}{1+\log (1+t)} }^{\frac 1{d\gamma}}, & \gamma=1+\frac 1d.
		\end{dcases}
	\end{split}
\end{equation}
Moreover, denote $a_M(t)=\sup_{s\in [0,t]}a(s)$, then
\begin{equation}
\label{aM bound}
	a_M(t)\ge \begin{dcases}
		C(1+t)^{\frac{1}{d\gamma}}, & \gamma>1+\frac 1d,\\
		C\br{\frac{1+t}{1+\log (1+t)} }^{\frac 1{d\gamma}}, & \gamma=1+\frac 1d,\\
		C(1+t)^{\frac{\gamma-1}{\gamma}},  & \gamma\in (1,1+\frac 1d).
	\end{dcases}
\end{equation}
\end{thm}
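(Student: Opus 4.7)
My strategy is a virial identity coupled with conservation of mass and the basic energy inequality. Testing the momentum equation against $\ub$, with the stress-free condition killing the boundary flux, yields the two standard global controls $\int_{\Omega_t}\rho\, dx = M$ and $\int_{\Omega_t}\br{\tfrac{1}{2}\rho |\ub|^2 + \tfrac{\rho^\gamma}{\gamma-1}}dx \leq E_0$ uniformly in $t$. Combined with Jensen's inequality on the bounded domain $\Omega_t$ of volume $\omega_{d-1}a(t)^d/d$, the pressure bound produces the companion lower bound $\int_{\Omega_t}\rho^\gamma dx \geq C M^\gamma a(t)^{-d(\gamma-1)}$, which is the mechanism converting boundedness of the pressure $L^\gamma$-norm into spreading of the free boundary.

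\smallskip
\noindent Next I differentiate the first moment $m(t) := \int_{\Omega_t}\rho\,\ub\cdot\xb\, dx$ along the flow, using $D_t\xb = \ub$ and the momentum equation $\rho D_t\ub = \div\Si$ together with $\Si\cdot\nb \equiv 0$ on $\Gamma_t$. Integration by parts on $\xb\cdot\div\Si$ annihilates the boundary contribution and produces
\[
\dot m(t) = \int_{\Omega_t}\rho|\ub|^2 dx + d\int_{\Omega_t}\rho^\gamma dx - 2\mu\int_{\Omega_t}\div\ub\, dx - d\int_{\Omega_t}\rho\div\ub\, dx.
\]
The first term is nonnegative, the second is what I ultimately want a time-integrated lower bound for, and $\int_{\Omega_t}\div\ub\, dx = (\omega_{d-1}/d)(a^d)'$ is a perfect derivative. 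The decisive term is $\int_{\Omega_t}\rho\div\ub\, dx$: for $\beta\neq 1$ it equals $(1-\beta)^{-1}(d/dt)\int \rho^\beta dx$ (the clean identity exploited in \cite{2016 Li-Zhang-JDE}), which degenerates precisely at the endpoint $\beta=1$. I replace it by the entropy identity
\[
\int_{\Omega_t}\rho\div\ub\, dx = -\frac{d}{dt}\int_{\Omega_t}\rho\log\rho\, dx,
\]
derived from $D_t(\rho\log\rho) = -(1+\log\rho)\rho\div\ub$ together with the Reynolds transport formula on the moving domain $\Omega_t$.

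\smallskip
\noindent Integrating the virial identity in time and applying Cauchy-Schwarz $|m(t)| \leq \sqrt{2E_0 M}\,a(t)$, Jensen's lower bound $\int_{\Omega_t}\rho\log\rho\, dx \geq -dM\log a(t) - C$, and the elementary upper bound $\int_{\Omega_t}\rho\log\rho\, dx \leq C(1+\int_{\Omega_t}\rho^\gamma dx) \leq C$, I arrive at the closed estimate
\[
\int_0^t\int_{\Omega_s}\rho^\gamma dx\, ds \;\leq\; C\br{a(t)^d + a(t) + \log a_M(t) + 1}.
\]
Combining with the Jensen-type lower bound $\int_0^t\int_{\Omega_s}\rho^\gamma dx\, ds \geq Ct/a_M(t)^{d(\gamma-1)}$ and the trivial $a(t) \leq a_M(t)$ produces $Ct \leq C a_M(t)^{d\gamma}+(\text{lower-order})$, hence $a_M(t) \gtrsim (1+t)^{1/(d\gamma)}$ in the main regime $\gamma>1+1/d$. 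At the critical $\gamma = 1+1/d$ the $\log a_M$ contribution lies at the same order as the principal power, and resolving the resulting implicit inequality yields the claimed logarithmic correction. The sub-critical bound $a_M(t) \geq C(1+t)^{(\gamma-1)/\gamma}$ admits a simpler independent derivation from mass and energy alone via Cauchy-Schwarz in time. Finally, the upgrade from $a_M(t)$ to $a(t)$ itself (for $\gamma \geq 1+1/d$) uses that the radial geometry prevents $a$ from lagging $a_M$ appreciably without producing a mass/pressure concentration incompatible with $\int\rho^\gamma dx \leq C$.

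\smallskip
\noindent\textbf{Main obstacle.} The endpoint $\beta=1$ is precisely where the $\beta\neq 1$ algebraic trick for $\int\rho^\beta\div\ub\, dx$ degenerates; the entropy identity above is the essential replacement, and the careful bookkeeping of the resulting $\log a$ contributions is what generates both the dichotomy at $\gamma = 1+1/d$ and the accompanying logarithmic factor.
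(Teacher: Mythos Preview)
Your entropy identity $\int_{\Omega_t}\rho\,\div\ub\,dx = -\frac{d}{dt}\int_{\Omega_t}\rho\log\rho\,dx$ is a correct and elegant device for the endpoint $\beta=1$, and your first-moment virial does deliver the $a_M(t)$ bound. (In fact, since $\log a$ is always negligible against $a^d$, your argument gives $a_M(t)\gtrsim (1+t)^{1/(d\gamma)}$ across the whole range $\gamma>1$ with no logarithmic loss---so your attribution of the log correction to the entropy term is not right.) The substantive gap is the passage from $a_M(t)$ to $a(t)$. After time-integration your virial yields only
\[
\int_0^t\!\!\int_{\Omega_s}\rho^\gamma\,dx\,ds \;\leq\; C\big(a(t)^d+1\big),
\]
and when you combine this with the Jensen lower bound $\int_{\Omega_s}\rho^\gamma\,dx\geq Ca(s)^{-d(\gamma-1)}$ you cannot avoid replacing $a(s)$ by $a_M(t)$ on the left. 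The resulting inequality $t\,a_M(t)^{-d(\gamma-1)}\leq C a(t)^d+C$ does not close for $a(t)$: nothing in your argument prevents $a$ from dipping well below $a_M$, and the sentence about ``radial geometry prevents $a$ from lagging $a_M$'' is not an argument---the energy bound only gives $a(t)\geq c>0$. Even if you import the $d=2$ upper bound $a_M(t)\leq C(1+t)^{1/2}$, you obtain at best $a(t)\gtrsim (1+t)^{(2-\gamma)/2}$, strictly weaker than $(1+t)^{1/(2\gamma)}$ for every $\gamma>1$.

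The paper circumvents this by working with the time-weighted functional
\[
H(t)=\int_0^{a(t)}(r-(1+t)u)^2\rho\, r^{d-1}dr+\frac{2(1+t)^2}{\gamma-1}\int_0^{a(t)}\rho^\gamma r^{d-1}dr,
\]
whose derivative contains \emph{both} the bad term $2d(1+t)\int\rho\,(r^{d-1}u)_r\,dr$ \emph{and} the dissipation $-2(1+t)^2\int\rho\,(r^{d-1}u)_r^2\,r^{1-d}\,dr$. Young's inequality absorbs the former into the latter at the cost of only $\frac{d^2}{2}\int\rho\, r^{d-1}dr=\frac{d^2}{2}M_0$; this is the $\beta=1$ substitute for the $\beta\neq1$ algebraic identity, and it hinges on the matching $(1+t)$ powers built into $H$. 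The payoff is a \emph{pointwise-in-}$t$ decay $\int\rho^\gamma r^{d-1}dr\leq C(1+t)^{-1}a(t)^d$ (with corrections when $\gamma<1+2/d$), which together with Jensen closes directly for $a(t)$. The logarithm at $\gamma=1+1/d$ enters here, from integrating the borderline weight $(1+s)^{-1}$ in the integrating-factor step---not from entropy.
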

\begin{rmk}
    In \cite{2016 Li-Zhang-JDE} , Li-Zhang proved an analogous result of Theorem \ref{expanding rate} in 2D case. Following their ideas, the algebraic expanding rate can be also established for 3D flows.
\end{rmk}
Now we are ready to state our main theorem as follows:
\begin{thm}
\label{global existence}
Suppose that $\gamma>1,d=2$. Assume that the initial data $(\rho_0,\ub_0)$ satisfy
\begin{equation}
	\label{initial data}
	\rho_0\in L^1(\Omega_0)\cap W^{1,q}(\Omega_0),\quad \ub_0\in H^2(\Omega_0),\quad q>2.
\end{equation}
and
\begin{align}
    \rho_0(r)>0 \quad\text{for} \quad r\in[0,a_0), \quad\rho_0(a_0)\geq 0, \quad \text{and} \int_{\Omega_0}\rho_0 dx=1.
\end{align}
Then for any $T>0$, the free-boundary-value problem \eqref{radial CNS}-\eqref{at} has a unique global spherically symmetric strong solution  $(\rho,\ub)$ satisfying
\begin{equation*}
	\begin{split}
		\rho \in L^\infty (0,T; L^1(\Omega_t)\cap W^{1,q}(\Omega_t)), \ub\in L^\infty (0,T; H^2(\Omega_t)).
	\end{split}
\end{equation*}
Moreover, it holds that
\begin{equation*}
	\begin{split}
				&\sup_{t\in [0,T]}\int_{\Omega_t}\br{\frac 12 \rho |\ub |^2 +\frac1{\gamma-1}\rho^\gamma  }d\xb+\int_0^T \int_{\Omega_t} (2\mu +\rho)|\div \ub |^2 d\xb dt\le C,\\
                &\sup_{t\in [0,T]}\int_{\Omega_t}F^2 d\xb +\int_0^T \int_{\Omega_t} \rho |\dot \ub |^2 d\xb dt\le C,\\
                &\sup_{t\in [0,T]}\int_{\Omega_t}\rho |\dot \ub |^2 d\xb +\int_0^T \int_{\Omega_t} (2\mu+\rho)|\div \dot \ub |^2 d\xb dt\le C,\\
                &\sup_{t\in [0,T]}\br{ \norm{\rho}_{L^1(\Omega_t)\cap W^{1,p}(\Omega_t)} +\norm{\ub}_{H^2(\Omega_t)} }+\int_0^T \br{\norm{\nabla \ub }_{L^\infty(\Omega_t)}^2+\norm{\ub_t}_{H^1(\Omega_t)}^2 }dt\le C.
	\end{split}
\end{equation*}
where $F$ denotes the effective viscous flux, which is given by
$$F\eqqcolon (2\mu+\rho)\div \ub-P(\rho). $$
\end{thm}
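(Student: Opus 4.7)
I would use the standard framework of local existence combined with a priori estimates and continuation. Local existence of a strong solution on some maximal interval $[0,T_*)$ follows by linearization and contraction in the moving domain along the lines of \cite{2016 Li-Zhang-JDE}, whose construction is insensitive to whether $\beta=1$ or $\beta>1$. The task then reduces to proving that the four displayed bounds hold as a priori estimates on every subinterval $[0,T]\subset[0,T_*)$; once established, the blow-up criterion built into the local theory yields $T_*=\infty$. It is convenient to pass to the Lagrangian mass coordinate $y=2\pi\int_0^r\rho(s,t)s\,ds$, which turns $\Omega_t$ into the fixed interval $[0,1]$ and eliminates the moving domain.

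The estimates are stacked in the order of the statement, each feeding the next. Testing the momentum equation against $\ub$ yields the basic energy bound (i); the boundary term vanishes by \eqref{stress free condition}. Introducing the effective viscous flux $F=(2\mu+\rho)\div\ub-\rho^\gamma$, which satisfies the Dirichlet condition $F|_{\Gamma_t}=0$, writing the momentum equation as $\rho\dot\ub=\partial_r F$, and testing against $\dot\ub$ produces (ii). Applying the material derivative to the momentum equation and testing against $\dot\ub$ gives (iii), with the commutator terms quadratic in $\nabla\ub$ absorbed by Gagliardo--Nirenberg using (ii) together with a pointwise bound on $\rho$. Finally, once $\rho\in L^\infty$ is available, differentiating the mass equation and running Gronwall in $W^{1,q}$ delivers the density regularity, while viewing the momentum equation as an elliptic system for $\ub$ with right-hand side $\rho\dot\ub+\nabla\rho^\gamma$ gives $\ub\in H^2$, completing (iv). The entire chain hinges on a uniform upper bound for $\rho$, to be proved separately.

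The uniform $L^\infty$ bound on $\rho$ is the main obstacle and the one place where the endpoint $\beta=1$, the radial geometry, and the critical dimension two must all be used. Along a Lagrangian trajectory the continuity equation reads
\[
\frac{D}{Dt}\log\rho = -\div\ub = -\frac{F+\rho^\gamma}{2\mu+\rho},
\]
so a Zlotnik-type comparison reduces the control of $\rho$ to an estimate of the oscillation $\int_{t_1}^{t_2}\norm{F}_{L^\infty(\Omega_s)}\,ds$ between successive crossings of a large threshold $\{\rho=M\}$. The 2D Brezis--Wainger logarithmic embedding $\norm{F}_{L^\infty}\lesssim 1+\norm{F}_{H^1}\log^{1/2}(e+\norm{F}_{W^{1,q}})$ converts the $L^2$ control of $F$ from (ii), together with the elliptic-regularity gain for $F$ on the one-dimensional radial interval governed by $\rho\dot\ub$, into a slowly growing $L^\infty$ bound; the algebraic expansion from Theorem~\ref{expanding rate} keeps the domain from degenerating and renders the estimate uniform in $t$. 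For $\beta>1$ this step enjoys a polynomial margin that dominates the log factor (as exploited in \cite{2016 Li-Zhang-JDE}); $\beta=1$ exactly consumes that margin, and it is the 2D logarithmic inequality together with the radial reduction of the stress tensor that still allows the estimate to close. The remaining parts of the argument are then routine bookkeeping along the lines of the Huang--Li--Xin framework \cite{2012 Huang-Li-Xin-CPAM}, adapted to the free boundary by the mass-coordinate change of variables.
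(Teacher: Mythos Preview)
Your ordering (i) $\to$ (ii) $\to$ density bound is circular. The estimate (ii) cannot be closed without already knowing $\rho\in L^\infty$: when you test $\rho\dot\ub=\nabla F$ against $\dot\ub$ and commute the material derivative through $\div\ub=(F+P)/(2\mu+\rho)$, the cubic term $\int_{\Omega_t}F^2|\div\ub|$ is handled by $\|F\|_{L^4}^2\lesssim\|F\|_{L^2}\|\nabla F\|_{L^2}=\|F\|_{L^2}\|\rho\dot\ub\|_{L^2}$, and absorbing $\|\rho\dot\ub\|_{L^2}$ into the left side $\|\sqrt\rho\,\dot\ub\|_{L^2}$ requires $\rho\le C$. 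Your Brezis--Wainger step is worse still, since it needs $\|\nabla F\|_{L^q}=\|\rho\dot\ub\|_{L^q}$ for $q>2$, which is not available at that stage. So the Zlotnik/log-embedding route you sketch does not get off the ground at $\beta=1$.

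The paper proceeds in the opposite order: the $L^\infty$ bound on $\rho$ is obtained \emph{first}, using only the basic energy (i) and two preliminary integrability lemmas ($\int_{\Omega_t}\rho^{2\gamma+1}\le C$ and $\int_{\Omega_t}\rho|\ub|^3\le C$), never touching $F$ in $H^1$. The mechanism is not a Zlotnik comparison on $\log\rho$ but the transport identity for the specific potential $\theta+\xi$, where $\theta=2\mu\ln\rho+\rho$ (the antiderivative of $(2\mu+\rho)/\rho$ singled out by $\beta=1$) and $\xi(r,t)=\int_{a(t)}^r\rho u\,ds$; along particle paths one gets
\[
2\mu\log\frac{\rho(r_{x_0}(t),t)}{\rho_0(r_0)}+\rho-\rho_0+\xi-\xi_0+\int_0^t P(\rho)\,d\tau=\int_0^t\!\!\int_{r}^{a(\tau)}\frac{\rho u^2}{s}\,ds\,d\tau.
\]
The radial inequality $\|u\|_{L^\infty}\le\|\div\ub\|_{L^2}$ bounds the right side by $C\int_0^t\|\rho(\tau)\|_{L^\infty}\|\div\ub\|_{L^2}^2\,d\tau$, while $|\xi|\le C\|\rho(t)\|_{L^\infty}^{2/3}$ follows from the $\rho|\ub|^3$ bound and the upper bound on $a(t)$. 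This closes by Gr\"onwall on $\|\rho(t)\|_{L^\infty}$ directly. Only afterwards are (ii)--(iv) derived, via a cutoff splitting into an interior Eulerian piece and a near-boundary Lagrangian piece; none of that is ``routine bookkeeping'' from the fixed-domain theory, because the free boundary forces the two-chart decomposition. Your proposal misses both the potential $\theta+\xi$ and the correct order of the hierarchy.
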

Now we comment on the analysis of this paper.

\begin{rmk}
    In Kazhikhov's original paper \cite{1995 Vaigant-Kazhikhov-SMJ}, the $L^p$ bound of the density is only established under the condition $\beta>1$. This was also proved by Li-Zhang \cite{2016 Li-Zhang-JDE} for the free boundary value problem under the same assumption $\beta>1$. Thus left an open problem whether the same result hold for the endpoint case $\beta=1$, this is the main problem we shall address in this paper. Indeed, for the endpoint case $\beta=1$, one of the main obstacles is to prove higher integrability of the density and velocities. This was achieved by using the special structure of radially symmetry, such as $\norm{\ub}_{L^\infty}\lesssim \norm{\nabla \ub}_{L^2}$. see Lemma \ref{lem 4.1} and \ref{lem 4.2}. Then the next crucial part is dedicated to the lower and upper bound of the density, i.e, to establish Gronwall's inequality for the density. Compared to Li-Zhang \cite{2016 Li-Zhang-JDE}, we use $\norm{\rho}_{L^\infty}$ instead of $\norm{\rho}_{L^\infty L^\infty}$ to bound $\int_{a(t)}^r\frac{\rho u^2}{s}ds$.
\end{rmk}

The rest of the paper is organized as follows. 

In Section \ref{sec 2}, we give some important observations and basic tools as preliminary; In Section \ref{sec 3}, we prove the algebraic expanding rate of the free domain in Theorem \ref{expanding rate}; In Section \ref{sec 4}, we establish the upper bound of the density in two-dimensional case, which is crucial to obtain the global regularity. In Section \ref{sec 5}, we obtain the high order estimates and then finish the proof of Theorem \ref{global existence}.

\section{Preliminary}
\label{sec 2}
In this section, we will recall some known facts and important lemmas that will be used to prove the main results.
\begin{lema}
	Assume that $\ub(\xb, t)$ satisfies the radial setting and the boundary condition $\ub(0, t)=0$, then it holds that
		\begin{equation}
		 \norm{\ub(t)}_{L^\infty(\Omega_t)}=\norm{u(t)}_{L^\infty(0,a(t))}\le \norm{\nabla \ub(t) }_{L^2(\Omega_t)}\le \norm{\div \ub (t)}_{L^2(\Omega_t)} .
	\end{equation}
\end{lema}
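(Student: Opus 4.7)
The plan is to convert all three norms to one-dimensional integrals in $r$ and then combine two algebraic identities with a weighted AM-GM. Differentiating $\ub(\xb,t) = u(r,t)\xb/r$ componentwise yields the standard radial formulas
\[
|\nabla\ub|^2 = u_r^2 + (d-1)\frac{u^2}{r^2}, \qquad \div\ub = u_r + \frac{d-1}{r}u,
\]
while $|\ub(\xb,t)| = |u(r,t)|$ gives the equality $\norm{\ub(t)}_{L^\infty(\Omega_t)} = \norm{u(t)}_{L^\infty(0,a(t))}$ at once.

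For the second inequality I would subtract and find
\[
(\div\ub)^2 - |\nabla\ub|^2 = 2(d-1)\frac{u u_r}{r} + (d-1)(d-2)\frac{u^2}{r^2};
\]
multiplying by the radial weight $r^{d-1}$ makes the right-hand side equal to $(d-1)\bigl(u^2 r^{d-2}\bigr)_r$, which is an exact $r$-derivative. Integrating from $0$ to $a(t)$ and using $u(0,t)=0$ (so the lower boundary contribution vanishes for both $d=2$ and $d=3$) leaves only the nonnegative term $\omega_{d-1}(d-1)\,u(a(t),t)^2\,a(t)^{d-2}$, which yields $\norm{\nabla\ub}_{L^2(\Omega_t)}\le\norm{\div\ub}_{L^2(\Omega_t)}$.

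The delicate step is the first inequality, which truly uses the two-dimensional structure. Starting from $u(0,t)=0$, I would write $u(r,t)^2 = 2\int_0^r u\,u_s\,ds$ and apply the weighted AM-GM $2|u u_s|\le u^2/s + u_s^2\,s$ to obtain, after extending the integration domain to $[0,a(t)]$,
\[
u(r,t)^2 \le \int_0^{a(t)}\!\Bigl(\frac{u^2}{s} + u_s^2\,s\Bigr)\,ds = \frac{1}{2\pi}\,\norm{\nabla\ub}_{L^2(\Omega_t)}^2,
\]
where the final equality rewrites the right-hand side via $dx = 2\pi r\,dr$ together with the formula for $|\nabla\ub|^2$ above. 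Since $(2\pi)^{-1}<1$, taking the supremum over $r$ closes the chain. The main subtlety is the matching of exponents: the weights $u^2/s$ and $u_s^2\, s$ produced by AM-GM must fit exactly into the measure $r\,dr$, which is why the clean, constant-free bound is essentially a two-dimensional phenomenon; in general dimension the same argument would control $|u(r)|\,r^{(d-2)/2}$ rather than $|u(r)|$ itself, reflecting the paper's emphasis on the ``critical space dimension two.''
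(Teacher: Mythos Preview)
Your proof is correct and follows essentially the same approach as the paper: both use $u^2 = 2\int_0^r u u_s\,ds$ together with the weighted AM--GM bound $2|uu_s|\le u^2/s + u_s^2 s$ for the $L^\infty$ estimate, and both exploit the nonnegativity of the cross term $\int_0^{a(t)} 2u u_r\,ds = u(a(t),t)^2\ge 0$ to pass from $\norm{\nabla\ub}_{L^2}$ to $\norm{\div\ub}_{L^2}$. Your version is slightly more careful---you track the $2\pi$ surface-area factor explicitly and write the $\nabla$-to-$\div$ step in arbitrary dimension via the exact derivative $(d-1)(u^2 r^{d-2})_r$---while the paper compresses both inequalities into a single chain in $d=2$ and silently absorbs the angular constant.
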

\begin{proof}
	We observe for any $r\in [0,a(t)]$,
	\begin{equation*}
		\begin{split}
			|u(r,t)|^2=& 2\int_0^r u \partial_r u (s,t)ds\le 2\int_0^{a(t)} \abs{\partial_r u(s,t) }\abs{\frac{u(s,t)}{s} }sds\\
			\le & \int_0^{a(t)} \br{\abs{\partial_r u }^2 s +\abs{\frac us }^2 s } ds\le \int_0^{a(t)}\br{\partial_r u+\frac{u}{s} }^2 sds\\
			=&\int_{\Omega_t}\abs{\div \ub(\xb,t) }^2 d\xb = \norm{\div \ub(t) }_{L^2(\Omega_t)}^2.
		\end{split}
	\end{equation*}
	Here the fourth step we cancel the cross term using the following fact:
	\begin{equation*}
		\begin{split}
			\int_0^{a(t)} 2u\partial_r u ds=\int_0^{a(t)}\partial_r (u^2)ds= u^2(a(t),t)\ge 0.
		\end{split}
	\end{equation*}
	Combining together, we obtain the above.
\end{proof}

\begin{lema}
	Let $(\rho, \ub, a(t))$ be a solution of free boundary system \eqref{radial CNS}-\eqref{at}, then  it satisfies the following
	mass conservation:
	\begin{equation}
	\label{mass conservation}
		\int_0^{a(t)}\rho r^{d-1}dr\equiv \int_0^{a_0}\rho_0 r^{d-1}dr=1,
	\end{equation}
 and the energy inequality:
	\begin{equation}
	\label{energy inequality}
		\sup_{t\in [0,T]}\int_{\Omega_t}\br{\frac 12 \rho |\ub |^2 +\frac1{\gamma-1}\rho^\gamma  }d\xb+\int_0^T \int_{\Omega_t} (2\mu +\rho)|\div \ub |^2 d\xb dt\le E_0,
	\end{equation}
	where $E_0$ depends only on initial datas $(\rho_0, u_0,a_0)$.
\end{lema}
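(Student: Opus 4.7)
The plan is to treat the two claims separately, both via balance laws in the one-dimensional radial variable $r$ weighted by $r^{d-1}$, combined with Leibniz's rule for the moving-endpoint integral and the stress-free condition at $r=a(t)$.

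For \eqref{mass conservation}, I would multiply the radial continuity equation by $r^{d-1}$ to obtain the conservative form $(\rho r^{d-1})_t+(\rho u r^{d-1})_r=0$. Applying Leibniz's rule,
\[
\frac{d}{dt}\int_0^{a(t)}\rho r^{d-1}dr
=\int_0^{a(t)}\rho_t r^{d-1}dr+\rho(a(t),t)a(t)^{d-1}a'(t),
\]
and using $a'(t)=u(a(t),t)$, the moving-boundary contribution and the outgoing flux at $r=a(t)$ cancel exactly, while $r^{d-1}$ vanishes at $r=0$. Constancy in time then combines with the initial normalization to give \eqref{mass conservation}.

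For \eqref{energy inequality}, I would multiply the radial momentum equation by $ur^{d-1}$. Using the continuity equation once,
\[
ur^{d-1}\!\left[(\rho u)_t+(\rho u^2)_r+\tfrac{d-1}{r}\rho u^2\right]=\tfrac12(\rho u^2)_tr^{d-1}+\tfrac12(\rho u^3r^{d-1})_r,
\]
so integrating in $r$ and applying Leibniz, the moving-boundary term and the convective flux at $r=a(t)$ cancel. Integration by parts on the pressure and viscous terms produces the two boundary contributions $a^{d-1}u\rho^\gamma$ and $a^{d-1}u(2\mu+\rho)(u_r+\tfrac{d-1}{r}u)$ at $r=a(t)$, which are equal and therefore cancel thanks to the stress-free condition \eqref{stress free condition}. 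The remaining interior pressure-work term is absorbed by the time derivative of the internal energy: a parallel Leibniz computation using $(\rho^\gamma)_t+u(\rho^\gamma)_r+\gamma\rho^\gamma\div\ub=0$ yields
\[
\frac{d}{dt}\int_0^{a(t)}\tfrac{\rho^\gamma}{\gamma-1}r^{d-1}dr=-\int_0^{a(t)}\rho^\gamma(\div\ub)r^{d-1}dr.
\]

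Adding the two identities and multiplying by the area of the unit $(d-1)$-sphere gives the exact energy identity
\[
\frac{d}{dt}\int_{\Omega_t}\!\!\left(\tfrac12\rho|\ub|^2+\tfrac{\rho^\gamma}{\gamma-1}\right)d\xb+\int_{\Omega_t}(2\mu+\rho)|\div\ub|^2d\xb=0,
\]
and integrating in time produces \eqref{energy inequality} with $E_0$ equal to the initial total energy. The only delicate point is the double bookkeeping of boundary terms -- those from integration by parts in $r$ and those from Leibniz in $t$; they cancel precisely because of the stress-free condition together with $a'(t)=u(a(t),t)$, and I anticipate no obstacle beyond keeping that algebra straight.
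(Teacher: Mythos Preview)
Your proposal is correct and is the standard derivation. Note, however, that the paper does not actually supply a proof of this lemma: it is stated in the preliminary section as a known fact and used without further justification. Your argument is exactly the routine one the authors presumably have in mind---multiply the radial continuity equation by $r^{d-1}$ for mass conservation, multiply the radial momentum equation by $ur^{d-1}$ for the kinetic balance, handle the pressure via the transport equation for $\rho^\gamma$, and use the stress-free condition \eqref{stress free condition} together with $a'(t)=u(a(t),t)$ to eliminate all boundary contributions. The key algebraic identity $(ur^{d-1})_r=r^{d-1}\div\ub$ makes the viscous dissipation come out as exactly $(2\mu+\rho)|\div\ub|^2 r^{d-1}$, matching \eqref{energy inequality}.
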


\begin{lema}(\cite{1984 Beal-Kato-Majda-CMP})\label{lem 2.3}
For any $2<q<\infty$, there is a constant $C(q)$ such that for all $\nabla\mathbf{u}\in W^{1,q}(\Omega)$, there holds 
\begin{align*}
\norm{\nabla\mathbf{u}}_{L^\infty(\Omega)}\leq C(\norm{\div \mathbf{u}}_{L^\infty(\Omega)}+\norm{\curl \mathbf{u}}_{L^\infty(\Omega)})\log(e+\norm{\nabla^2\mathbf{u}}_{L^q(\Omega)})+C\norm{\nabla\mathbf{u}}_{L^2(\Omega)}+C
\end{align*}
\end{lema}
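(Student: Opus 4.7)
The plan is to follow the classical Beale-Kato-Majda approach: represent $\nabla \mathbf{u}$ as a Calder\'on-Zygmund singular integral applied to $\div \mathbf{u}$ and $\curl \mathbf{u}$, and convert the well-known endpoint failure of such operators on $L^\infty$ into a logarithmic loss by splitting the integration domain into three scales. Starting from the vector identity $-\Delta \mathbf{u} = \nabla(-\div \mathbf{u}) + \curl(\curl \mathbf{u})$, I would extend $\mathbf{u}$ to $\R^d$ (the extension error being absorbed into the additive terms $C\norm{\nabla \mathbf{u}}_{L^2}+C$) and apply the Newtonian potential, obtaining after one integration by parts a pointwise representation
\[
\nabla \mathbf{u}(x) \;=\; \mathrm{p.v.}\!\int_{\R^d} K(x-y)\,\bigl((\div \mathbf{u})(y),(\curl \mathbf{u})(y)\bigr)\,dy \;+\; (\text{smooth remainder}),
\]
where $K(z)$ is a matrix-valued Calder\'on-Zygmund kernel of size $|z|^{-d}$ with zero mean on spheres.

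Fixing $x$, I would split $\R^d$ into $\{|y-x|<\varepsilon\}$, $\{\varepsilon\le|y-x|<R\}$, $\{|y-x|\ge R\}$ with $\varepsilon<R$ to be chosen. On the inner ball, the cancellation of $K$ lets me subtract $(\div \mathbf{u})(x)$ and $(\curl \mathbf{u})(x)$; the Morrey embedding $W^{1,q}\hookrightarrow C^{1-d/q}$ (valid since $q>d$) then controls the resulting difference by $|x-y|^{1-d/q}\norm{\nabla^2 \mathbf{u}}_{L^q}$, giving an inner contribution of order $\varepsilon^{1-d/q}\norm{\nabla^2 \mathbf{u}}_{L^q}$. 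On the annulus, the pointwise bound $|K|\lesssim|x-y|^{-d}$ yields at most $C\bigl(\norm{\div \mathbf{u}}_{L^\infty}+\norm{\curl \mathbf{u}}_{L^\infty}\bigr)\log(R/\varepsilon)$. On the exterior, $K(x-\cdot)$ is $L^2$-integrable with norm $\lesssim R^{-d/2}$, so Cauchy-Schwarz produces $R^{-d/2}\norm{\nabla \mathbf{u}}_{L^2}$. Fixing $R$ of the order of $\mathrm{diam}\,\Omega$ and choosing $\varepsilon=\norm{\nabla^2 \mathbf{u}}_{L^q}^{-q/(q-d)}$ so that the inner piece is of order one produces the announced logarithmic dependence $\log(e+\norm{\nabla^2 \mathbf{u}}_{L^q})$, while the exterior piece is absorbed into $C\norm{\nabla \mathbf{u}}_{L^2}$.

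The principal difficulty is the reason a logarithmic factor is needed at all: Riesz transforms are bounded on every $L^p$ with $1<p<\infty$ but fail on $L^\infty$ (they only map into BMO), so no purely linear estimate in $\norm{\div \mathbf{u}}_{L^\infty}+\norm{\curl \mathbf{u}}_{L^\infty}$ can hold. The scale-splitting above is the standard device converting this logarithmic BMO-vs-$L^\infty$ gap into a genuine $\log(e+\norm{\nabla^2 \mathbf{u}}_{L^q})$ loss, and the optimal choice of $\varepsilon$ is what calibrates the inner and annular pieces against each other. A minor secondary issue is working on the bounded domain $\Omega$ rather than $\R^d$: this is handled by a standard cutoff/extension (or, equivalently, by working with the explicit Green's function of $-\Delta$ on $\Omega$, whose principal part is still a Calder\'on-Zygmund kernel), and the associated errors produce exactly the lower-order terms $C\norm{\nabla \mathbf{u}}_{L^2}+C$ already present in the stated inequality.
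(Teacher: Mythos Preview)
The paper does not actually prove this lemma: it is stated as a preliminary result and attributed to Beale--Kato--Majda \cite{1984 Beal-Kato-Majda-CMP} with no argument given. Your sketch is a faithful outline of the standard proof from that reference (representation of $\nabla\mathbf{u}$ via a Calder\'on--Zygmund kernel acting on $\div\mathbf{u}$ and $\curl\mathbf{u}$, followed by the three-scale decomposition and optimization in $\varepsilon$), so there is nothing to compare against and your approach is the expected one.
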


\begin{lema}(\cite{2004 Feireisl})\label{lem 2.4}
    Let $\mathbf{u}\in W^{1,2}(\Omega)$, and let $\rho$ be a non-negative function such that 
    \begin{align*}
        0<M_1\leq \int_{\Omega}\rho dx,\quad \int_{\Omega}\rho^\gamma dx\leq M_2
    \end{align*}
    where $\Omega\subset \mathbb{R}^2$ is a bounded domain and $\gamma>1$, $M_1>0$, $M_2>0$. Then there exist a constant $C$ depending only on $M_1,M_2$ and $\Omega$ such that
    \begin{align*}
        \norm{\mathbf{u}}_{L^2(\Omega)}^2\leq C\left(\norm{\nabla\mathbf{u}}_{L^2(\Omega)}^2+\left(\int_\Omega\rho|\mathbf{u}|dx\right)^2\right).
    \end{align*}
\end{lema}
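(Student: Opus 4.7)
\medskip

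\noindent\textbf{Proof proposal.} The plan is to argue by contradiction via a compactness/concentration argument in the spirit of standard generalized Poincar\'e inequalities. Suppose the stated inequality fails. Then for every $n\in\N$ we can find $\ub_n\in W^{1,2}(\Omega)$ and $\rho_n\ge 0$ satisfying
\begin{equation*}
	M_1\le \int_\Omega \rho_n\,d\xb,\qquad \int_\Omega \rho_n^\gamma\,d\xb\le M_2,
\end{equation*}
such that
\begin{equation*}
	\norm{\ub_n}_{L^2(\Omega)}^2 > n\br{\norm{\nabla \ub_n}_{L^2(\Omega)}^2 +\br{\int_\Omega \rho_n|\ub_n|\,d\xb}^2}.
\end{equation*}
After normalising by setting $\mathbf{v}_n\coloneq \ub_n/\norm{\ub_n}_{L^2(\Omega)}$, one obtains a sequence with $\norm{\mathbf{v}_n}_{L^2(\Omega)}=1$, $\norm{\nabla \mathbf{v}_n}_{L^2(\Omega)}\to 0$, and $\int_\Omega \rho_n|\mathbf{v}_n|\,d\xb \to 0$.

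The first key step is to extract a nonzero constant limit for $\mathbf{v}_n$. Since $(\mathbf{v}_n)$ is bounded in $H^1(\Omega)$, the Rellich--Kondrachov theorem (using $\Omega\subset\R^2$ bounded) gives, along a subsequence, $\mathbf{v}_n\rightharpoonup \mathbf{v}$ weakly in $H^1$ and $\mathbf{v}_n\to \mathbf{v}$ strongly in $L^p(\Omega)$ for every $p<\infty$. The weak lower semicontinuity of the Dirichlet integral yields $\norm{\nabla \mathbf{v}}_{L^2(\Omega)}=0$, so $\mathbf{v}$ is a constant vector, and the $L^2$ convergence gives $\norm{\mathbf{v}}_{L^2(\Omega)}=1$; in particular $\mathbf{v}\ne 0$.

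The second and main step is to pass to the limit in the weighted integral $\int_\Omega \rho_n|\mathbf{v}_n|\,d\xb$ using only the $L^\gamma$ control on $\rho_n$. I write
\begin{equation*}
	\int_\Omega \rho_n|\mathbf{v}_n|\,d\xb = \int_\Omega \rho_n|\mathbf{v}|\,d\xb + \int_\Omega \rho_n\br{|\mathbf{v}_n|-|\mathbf{v}|}\,d\xb,
\end{equation*}
and control the error by H\"older's inequality with conjugate exponent $\gamma'=\gamma/(\gamma-1)$:
\begin{equation*}
	\abs{\int_\Omega \rho_n\br{|\mathbf{v}_n|-|\mathbf{v}|}\,d\xb} \le \norm{\rho_n}_{L^\gamma(\Omega)}\norm{\mathbf{v}_n-\mathbf{v}}_{L^{\gamma'}(\Omega)}\longrightarrow 0,
\end{equation*}
since $\mathbf{v}_n\to\mathbf{v}$ strongly in $L^{\gamma'}(\Omega)$ by Rellich. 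Now $\mathbf{v}$ is a nonzero constant, so $\int_\Omega \rho_n|\mathbf{v}|\,d\xb = |\mathbf{v}|\int_\Omega \rho_n\,d\xb \ge |\mathbf{v}|M_1 > 0$, which contradicts $\int_\Omega \rho_n|\mathbf{v}_n|\,d\xb\to 0$.

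The main technical obstacle I anticipate is precisely the passage to the limit in the weighted term: with only an $L^\gamma$ bound on $\rho_n$ (and in particular no pointwise or equi-integrability information beyond that), one must rely on the 2D embedding $H^1(\Omega)\hookrightarrow\hookrightarrow L^{\gamma'}(\Omega)$, which holds for every $\gamma>1$ since $\gamma'<\infty$. This is where the hypothesis $\gamma>1$ and the two-dimensionality of $\Omega$ are used in an essential way; in higher dimensions one would need $\gamma>2d/(d+2)$ to run exactly the same argument, but the dimension two case is clean and suffices here.
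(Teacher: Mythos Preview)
The paper does not prove this lemma; it is quoted from Feireisl \cite{2004 Feireisl} as a preliminary tool and no argument is given. Your contradiction/compactness argument is correct and is precisely the standard proof of such weighted Poincar\'e inequalities: normalise, extract a constant limit via Rellich--Kondrachov and weak lower semicontinuity, then use the $L^\gamma$ bound on $\rho_n$ together with the two-dimensional compact embedding $H^1(\Omega)\hookrightarrow\hookrightarrow L^{\gamma'}(\Omega)$ (valid for every $\gamma'<\infty$) to pass to the limit in the weighted term and reach a contradiction with the mass lower bound $M_1$. The only implicit hypotheses you rely on are that $\Omega$ is connected (so $\nabla\mathbf v=0$ forces $\mathbf v$ to be a single constant) and that $\partial\Omega$ is regular enough for the Rellich--Kondrachov theorem to apply to $H^1(\Omega)$ rather than $H^1_0(\Omega)$; both are harmless here, since in the paper $\Omega_t$ is always a disk.
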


\section{Proof of Theorem \ref{expanding rate}}
\label{sec 3}

This section is devoted to the proof of Theorem \ref{expanding rate}. \\
The upper bound of $a(t)$ is directly from the energy inequality \eqref{energy inequality} in the case $d=2$:
\begin{equation}
\label{at lower bound}
	\begin{split}
		a(t)=&a_0+\int_0^t a'(s)ds\le a_0+Ct^\frac 12\br{\int_0^t |a'(s)|^2ds }^\frac 12\\
		\le & C\br{1+t^\frac12\int_0^t u^2(a(s),s)ds }\le C(1+t)^\frac 12,
	\end{split}
\end{equation}
where in the last step
\begin{equation*}
	\begin{split}
		\int_0^t u^2(a(s),s)ds=&\int_0^t \int_0^{a(s)}2 u u_r dr ds\le \int_0^t \int_0^{a(s)}(u_r+\frac ur)^2 rdrds\\
		\le &\int_0^t \int_{\Omega_s}|\div \ub |^2 d\xb ds\le E_0.
	\end{split}
\end{equation*}
Now we attempt to derive the lower bound for $a(t)$ by setting the potential energy as the mediate term. On the one hand, we can deduce from the mass conservation \eqref{mass conservation}:
\begin{equation*}
	\begin{split}
		1\equiv & \int_0^{a(t)}\rho r^{d-1}dr= \int_0^{a(t)}r^{(d-1)\frac{\gamma-1}{\gamma}}\cdot \rho r^{\frac{ d-1}{\gamma}}dr\\
		\le &C(a(t))^{\frac{d(\gamma-1)}{\gamma}}\br{\int_0^{a(t)}\rho^\gamma r^{d-1}dr}^{\frac{1}{\gamma}},
	\end{split}
\end{equation*}
which yields that 
\begin{equation}\label{a lower bound}
    (a(t))^{-d(\gamma-1)}\leq C\int_0^{a(t)}\rho^\gamma r^{d-1}dr.
\end{equation}
Therefore, there exists a constant $C>0$ depends only on $E_0,\gamma,$ and $d$ such that
\begin{align}
    a(t)\geq C.
\end{align}

On the other hand, to obtain the lower bound of $a(t)$, the key idea is to define a energy functional for the spherical symmetric solution as
\begin{equation}
\label{energy functional}
	\begin{split}
		H(t)=&\int_0^{a(t)}(r-(1+t)u)^2\rho r^{d-1}dr+2(1+t)^2\int_0^{a(t)}\frac{\rho^\gamma}{\gamma-1}r^{d-1}dr\\
		=& \int_0^{a(t)} \rho r^{d+1}dr-2(1+t)\int_0^{a(t)}\rho u r^d dr\\
	&+(1+t)^2\int_0^{a(t)}\rho u^2r^{d-1}dr+2(1+t)^2\int_0^{a(t)}\frac{\rho^\gamma}{\gamma-1}r^{d-1}dr.
	\end{split}
\end{equation}
We can check directly that
\begin{equation}
\label{functional derivative}
	\begin{split}
		H'(t)&=\int_0^{a(t)}(\rho_t r^2-2\rho u r) r^{d-1}dr+2(1+t)\int_0^{a(t)}\br{\rho u^2+\frac{2\rho^\gamma}{\gamma-1}-(\rho u)_t r }r^{d-1}dr\\
		&\quad+(1+t)^2 \int_0^{a(t)}\br{\rho u^2 +\frac{2\rho^\gamma}{\gamma-1} }_t r^{d-1}dr\\
        &\quad+\rho(a(t))^{d+1}u(a(t),t)-2(1+t)\rho(a(t))^du^2(a(t),t)+(1+t)^2\rho (a(t))^{d-1}u^3(a(t),t)\\
        &\quad+\frac{2}{\gamma-1}(1+t)^2\rho^\gamma(a(t))^{d-1}u(a(t),t)\\
        &\coloneqq I_0(t) +I_1(t)+I_2(t)+I_B(t).
	\end{split}
\end{equation}
Due to the stress-free boundary condition \eqref{stress free condition}, one obtains
\begin{equation*}
	\begin{split}
		I_0 (t)=&-\int_0^{a(t)} \br{(\rho u)_r r^2+(d-1)\rho ur+2\rho u r}r^{d-1}dr\\
		=&-\int_0^{a(t)}\br{\rho u r^{d+1} }_r dr=-\rho(a(t))^{d+1}u(a(t),t) ,\\
		I_1 (t)
        =&2(1+t)\int_0^{a(t)}\br{\rho u^2+\frac{2\rho^\gamma}{\gamma-1}+\partial_r(\rho u^2)r+(d-1)\rho u^2-\partial_rFr}r^{d-1} dr\\
        =&2(1+t)\int_0^{a(t)}\br{\rho u^2+\frac{2\rho^\gamma}{\gamma-1}-d\rho u^2+(d-1)\rho u^2+dF}r^{d-1} dr\\
        &+2(1+t)\rho(a(t))^du^2(a(t),t)\\
        =&2(1+t)\int_0^{a(t)}\br{\br{\frac{2}{\gamma-1}-d} \rho^\gamma r^{d-1}+d(2\mu+\rho)(r^{d-1}u)_r }dr \\
        &+2(1+t)\rho(a(t))^du^2(a(t),t),\\
    	I_2(t)
        =&(1+t)^2 \int_0^{a(t)}\br{2u\partial_t(\rho u)-u^2\partial_t\rho+\frac{2}{\gamma-1}\partial_t(\rho^\gamma)}r^{d-1}dr\\
        =&(1+t)^2 \int_0^{a(t)} 2u(-\partial_r(\rho u^2)-\frac{d-1}{r}\rho u^2+\partial_rF)r^{d-1}dr\\
        &+(1+t)^2 \int_0^{a(t)}\left(u^2-\frac{2\gamma}{\gamma-1}\rho^{\gamma-1}\right)\left((\rho u)_r+\frac{d-1}{r}\rho u\right)r^{d-1}dr\\
        =&-2(1+t)^2 \int_0^{a(t)} (2\mu+\rho)\br{r^{d-1} u }^2_r \frac 1{ r^{d-1}}dr-(1+t)^2\rho (a(t))^{d-1}u^3(a(t),t)\\
        &-\frac{2}{\gamma-1}(1+t)^2\rho^\gamma(a(t))^{d-1}u(a(t),t).
	\end{split}
\end{equation*}
It follows after substituting the above computations into \eqref{functional derivative}:
\begin{equation}
\label{H derivative}
	\begin{split}
		H'(t)=&{2(\frac {2}{\gamma-1}-d)}(1+t)\int_0^{a(t)}\rho^\gamma r^{d-1}dr+2d(1+t)\int_0^{a(t)}(2\mu+\rho)(r^{d-1}u)_r dr\\
        &-2(1+t)^2 \int_0^{a(t)} (2\mu+\rho)\br{r^{d-1} u }^2_r \frac 1{ r^{d-1}}dr
	\end{split}
\end{equation}
then
\begin{equation}
\label{H derivative'}
	\begin{split}
		H'(t)\leq& {2(\frac {2}{\gamma-1}-d)}(1+t)\int_0^{a(t)}\rho^\gamma r^{d-1}dr+4d\mu (1+t) \int_0^{a(t)}(r^{d-1}u)_rdr\\
		& +2d(1+t)\int_0^{a(t)}\rho(r^{d-1}u)_r dr-2(1+t)^2 \int_0^{a(t)} \rho\br{r^{d-1} u }^2_r \frac 1{ r^{d-1}}dr\\
		\le & {2(\frac {2}{\gamma-1}-d)}(1+t)\int_0^{a(t)}\rho^\gamma r^{d-1}dr+4d\mu (1+t) r^{d-1}u(a(t),t)+\frac{d^2}{2}\int_0^{a(t)}\rho r^{d-1}dr\\
		\le & {2(\frac {2}{\gamma-1}-d)}(1+t)\int_0^{a(t)}\rho^\gamma r^{d-1}dr +4\mu (1+t) \frac{d}{dt}\br{a^d(t)} +\frac{d^2}{2}M_0.
	\end{split}
\end{equation}
Here we used \eqref{at}, \eqref{mass conservation} and the decomposition
\begin{equation}
\label{decomposition}
	\begin{split}
		2d(1+t)(r^{d-1}u)_r\le 2(1+t)^2(r^{d-1}u)_r^2 \frac{1}{r^{d-1}}+\frac{d^2}{2}r^{d-1}.
	\end{split}
\end{equation}
For $\frac {2}{\gamma-1}-d\le 0$, or equivalently  $\gamma\ge 1+\frac 2d$, it yields that
\begin{equation*}
	\begin{split}
		H'(t)\le & C(1+t) \frac{d}{dt}\br{a^d(t)} +C\\
		\le & C\frac{d}{dt}\br{(1+t)a^d(t)}-C a^{d}(t)+C\\
		\le & C\frac{d}{dt}\br{(1+t)a^d(t)}+C.
	\end{split}
\end{equation*}
Then we integrate above with respect to $t$ and obtain
\begin{equation*}
	\begin{split}
		H (t)\le C(1+t)a^d(t)+Ct+C\le C(1+t)a^d(t).
	\end{split}
\end{equation*}
Together with \eqref{a lower bound} and \eqref{energy functional}, it leads to
\begin{equation*}
	\begin{split}
		\br{a(t)}^{-d(\gamma-1)}\le C\int_0^{a(t)}	\rho^{\gamma} r^{d-1}dr\le C(1+t)^{-1}a^d(t),
	\end{split}
\end{equation*}
which derive the lower bound \eqref{a bound}$_1$. And in the case $\gamma\in [1+\frac 1d,1+\frac 2d)$, which implies
$$d(\gamma-1)-1\ge 0,$$
we can deduce from \eqref{H derivative'} that
\begin{equation*}
	\begin{split}
		((1+t)^{-(2-d(\gamma-1))}H(t))'\le & C(1+t)^{-(2-d(\gamma-1))}((1+t)\frac{d}{dt}\br{a^d(t)}+\frac {d^2}2 )\\
		\le & C\frac{d}{dt}\br{(1+t)^{d(\gamma-1)-1}a^d(t)}+C(1+t)^{-(2-d(\gamma-1))}\\
		&-C\br{d(\gamma-1)-1}(1+t)^{-(2-d(\gamma-1))}a^d(t)\\
		\le & C\frac{d}{dt}\br{(1+t)^{d(\gamma-1)-1}a^d(t)}+C(1+t)^{-(2-d(\gamma-1))}.
	\end{split}
\end{equation*}
Then it comes for $\gamma\in (1+\frac 1d,1+\frac 2d):$
\begin{equation}
\label{gamma2}
	\begin{split}
		\int_0^{a(t)}\rho^\gamma r^{d-1}dr\le  Ca^d(t)(1+t)^{-1}+C(1+t)^{-1}\le C(1+t)^{-1}a^d(t), \\
	\end{split}
\end{equation}
and for $\gamma=1+\frac 1d:$
\begin{equation}
\label{gamma3}
\begin{split}
	\int_0^{a(t)}\rho^\gamma r^{d-1}dr\le & Ca^d(t)(1+t)^{-1}+C(1+t)^{-1}\log(1+t)\\
	\le & C(1+t)^{-1}(1+\log (1+t))a^d(t).
\end{split}
\end{equation}
Combining \eqref{a lower bound}, \eqref{gamma2} and \eqref{gamma3}, we arrive at \eqref{a bound} finally.

It remains to check the lower bound of $a_M(t)$. For $\gamma\ge 1+\frac 2d$, we deduce from \eqref{H derivative}, \eqref{decomposition} and the mass conservation \eqref{mass conservation} that
\begin{equation*}
	\begin{split}
		H'(t)\le \frac {d^2}{2}\int_0^{a(t)}(2\mu+\rho)r^{d-1}dr\le C(1+a^d(t)).
	\end{split}
\end{equation*}
Integrating the above with respect to time, we get
\begin{equation*}
	(1+t)^{2}\int_0^{a(t)}\rho^\gamma r^{d-1}dr\le C\br{(1+t)+\int_0^t a^{d}(s)ds },
\end{equation*}
which together with \eqref{a lower bound} yields that
\begin{equation}
	\br{a(t)}^{-d(\gamma-1)}\le C(1+t)^{-2}\br{(1+t)+\int_0^t (a(s))^dds}.
\end{equation}
Therefore, we claim \eqref{aM bound} holds by a simple proof of contradiction.\\
 As for $\gamma\in (1,1+\frac 2d)$, it can be estimated as
\begin{equation*}
	\begin{split}
		H'(t)\le \frac{2-d(\gamma-1)}{(1+t)}H(t)+C(1+a^d(t)).
	\end{split}
\end{equation*}
Multiplying it with $(1+t)^{2-d(\gamma-1)}$, we have
\begin{equation*}
	\begin{split}
		(1+t)^{2-d(\gamma-1)}((1+t)^{-(2-d(\gamma-1))}H)'\le & C(a^d(t)+1).
	\end{split}
\end{equation*}
Consequently, for $\gamma\not=1+\frac1d$, 
\begin{equation}
	\begin{split}
		\int_0^{a(t)}\rho^\gamma r^{d-1}dr\le C(1+t)^{-1}+C(1+t)^{-d(\gamma-1)}\int_0^t (1+s)^{-(2-d(\gamma-1))}a^d(s)ds.
	\end{split}
\end{equation}
For $\gamma=1+\frac 1d$, 
\begin{equation}
	\begin{split}
		\int_0^{a(t)}\rho^\gamma r^{d-1}dr\le C(1+t)^{-1}\log (1+t)+C(1+t)^{-1}\int_0^t (1+s)^{-1}a^d(s)ds.
	\end{split}
\end{equation}
Together with \eqref{a lower bound}, and proving by contradiction,  we finally arrive at \eqref{aM bound}.
\section{The a priori estimates}
\label{sec 4}

In this section, we will derive the main a priori estimates \eqref{high integrability}, \eqref{high integrability v} and \eqref{xi estimate},  then obtain the bound \eqref{density bound} of the density along particle paths, which is crucial to establish the global existence of the strong solutions.

We define the effective viscous flux $F$ and auxiliary function $\theta$ of the density as follows
\begin{equation*}
\begin{split}
	F(r,t)=& (2\mu +\rho)\div \ub-P,\\
	\theta(r,t)=& 2\mu \ln \rho +\rho.
\end{split}
\end{equation*}
Then by \eqref{radial CNS} it satisfies that
\begin{equation}
\label{theta equation}
	\begin{split}
		\theta_t + u\partial_r \theta=-(2\mu+\rho)\div \ub=-F-P,
	\end{split}
\end{equation}
and
\begin{equation}
\label{F eqaution}
	(\rho u)_t +\partial_r \br{\rho u^2 }+\frac{\rho u^2}{r}=\partial_r F.
\end{equation}
Integrating \eqref{F eqaution} on $[a(t), r]$, we have
\begin{equation*}
	\begin{split}
		F(r,t)=&F(a(t),t)+\int_{a(t)}^r (\rho u)_t ds+\int_{a(t)}^r \partial_r (\rho u^2)ds+\int_{a(t)}^r \frac{\rho u^2}{s}ds\\
		=& \partial_t \int_{a(t)}^r \rho u ds+u \partial_r \int_{a(t)}^r \rho uds+\int_{a(t)}^r \frac{\rho u^2}{s}ds.
	\end{split}
\end{equation*}
Substitute the above into \eqref{theta equation},  we can deduce the following structure
\begin{equation}\label{4-1}
	 \partial_t \br{\theta +\xi} +u\partial_r \br{\theta+ \xi }+\int_{a(t)}^r \frac{\rho u^2}{s}ds+P(\rho)=0, 
\end{equation}
where $\xi(r,t)=\int_{a(t)}^r\rho uds.$

Now we first show the higher integrability of the density in the following lemma: 
\begin{lema}\label{lem 4.1}
	Suppose $\gamma>1$, then there exists a constant $C>0$ depends on $\gamma, T$ and initial data $(\rho_0, \ub_0,a_0)$, such that
	\begin{equation}
    \label{high integrability}
		\begin{split}
			\sup_{t\in [0,T]}\int_{\Omega_t}\rho^{2\gamma+1}\le C.
		\end{split}
	\end{equation}
\end{lema}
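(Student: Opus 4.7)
The plan is to establish the $L^{2\gamma+1}$ bound by testing the structural equation \eqref{4-1} for $\eta \coloneqq \theta+\xi$ against $\rho(\eta+K)^\gamma$ for a suitable constant $K>0$, thereby producing a coercive pressure-dissipation term that controls $\int\rho^{2\gamma+1}dx$. Recall that \eqref{4-1} reads $D_t\eta+Q+P(\rho)=0$, with $D_t=\partial_t+u\partial_r$ and $Q(r,t)=\int_{a(t)}^r \rho u^2/s\,ds$. Since $\theta=2\mu\ln\rho+\rho$ blows down as $\rho\to0$, I would fix $K>0$ large enough (depending only on the a priori lower bound of $\rho$ on $[0,T]$ and a uniform bound on $\xi$, which itself comes from the mass and energy controls via Cauchy--Schwarz in the $s\,ds$-measure) so that $\eta+K\geq c(\rho+1)$ pointwise.

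Multiplying \eqref{4-1} by $\rho(\eta+K)^\gamma$ and integrating over $\Omega_t$, the continuity equation yields the Reynolds-transport identity $\int_{\Omega_t}\rho D_t f\,dx=\frac{d}{dt}\int_{\Omega_t}\rho f\,dx$, so
\[
\frac{1}{\gamma+1}\frac{d}{dt}\int_{\Omega_t}\rho(\eta+K)^{\gamma+1}dx+\int_{\Omega_t}\rho^{\gamma+1}(\eta+K)^\gamma dx=-\int_{\Omega_t}\rho(\eta+K)^\gamma Q\,dx.
\]
The pressure integral on the left is bounded below by $c\int_{\Omega_t}\rho^{2\gamma+1}dx-C$, providing exactly the target dissipation. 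Setting $N(t)=\int_{\Omega_t}\rho(\eta+K)^{\gamma+1}dx$, the game reduces to controlling the right-hand source integral by a quantity of the form $C(t)(1+N(t))$ with $\int_0^T C(t)\,dt<\infty$, and then applying Grönwall's inequality.

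The main obstacle is the estimate of the $Q$-integral. The central 2D radial tool is the Hardy-type bound $\int_0^{a(t)}u^2/s\,ds\leq\|\div\ub\|_{L^2(\Omega_t)}^2$, obtained by expanding $(u_r+u/r)^2\,r$ and discarding the nonnegative boundary term $u(a(t),t)^2$; together with Lemma 2.1 this gives $\|u\|_{L^\infty(\Omega_t)}^2$ in $L^1_t$ via the energy inequality \eqref{energy inequality}. From these I would derive a pointwise bound on $|Q(r,t)|$ by applying Hölder's inequality in the $s\,ds$ measure with a weight tuned to the $L^\gamma$-bound on $\rho$ from energy; this produces an estimate of the form $|Q(r,t)|\leq C(t)\,r^{-\alpha}$ with $\alpha<\frac{2\gamma-1}{2\gamma+1}$, so that the remaining factor $r\,dr$ absorbs the singularity. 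Interpolating $\int\rho(\eta+K)^\gamma dx$ between $\int\rho\,dx=1$ and $N(t)$ via Hölder gives a bound $\lesssim N(t)^{\gamma/(\gamma+1)}$.

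I expect the subtle part to be step of bounding $|Q|$ while avoiding direct use of $\|\rho\|_{L^\infty}$: as the authors note in the remark, Li--Zhang used an $L^\infty_tL^\infty_x$ bound (which is what $\beta>1$ easily provides), whereas here in the endpoint $\beta=1$ one has to settle for an $L^\infty$-in-space estimate for a.e.\ $t$, combined carefully with the time-integrable factor $\|\div\ub\|_{L^2}^2$. Once this is achieved, the resulting Grönwall inequality of the form
\[
\frac{d}{dt}N(t)+c\int_{\Omega_t}\rho^{2\gamma+1}dx\leq C+C g(t)(1+N(t)),\qquad \int_0^T g(t)\,dt<\infty,
\]
yields $\sup_{t\in[0,T]}N(t)\leq C$ and hence \eqref{high integrability}.
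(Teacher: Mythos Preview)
Your overall strategy---testing \eqref{4-1} against a power of $\theta+\xi$ weighted by $\rho$ and closing via Gr\"onwall---is indeed the paper's approach, but two concrete gaps prevent your version from going through.

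\textbf{Positivity of $\eta+K$ is unavailable.} You propose choosing $K$ so that $\eta+K\ge c(\rho+1)$, invoking an a priori lower bound on $\rho$ and a uniform pointwise bound on $\xi$. Neither is in hand: the lower bound on $\rho$ is Lemma~\ref{lem 4.3}, which is proved \emph{using} Lemmas~\ref{lem 4.1} and \ref{lem 4.2}, so appealing to it here is circular; and $|\xi|=\bigl|\int_{a(t)}^r\rho u\,ds\bigr|$ is an integral in the unweighted measure $ds$, so Cauchy--Schwarz against the $s\,ds$-controlled quantities $\int\rho\,s\,ds$ and $\int\rho u^2 s\,ds$ leaves a nonintegrable $s^{-1}$ factor near the origin (indeed the paper's eventual pointwise estimate \eqref{xi estimate} still carries $\|\rho\|_{L^\infty}^{2/3}$). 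The paper sidesteps both issues by testing with $\rho f^{2\gamma-1}$ where $f=(\theta+\xi)_+$, so no sign condition is needed, and then recovers $\int\rho^{2\gamma+1}$ from $\int\rho f^{2\gamma}$ by splitting on $\{\rho\gtrless C\}$ and controlling $\int\rho|\xi|^{2\gamma}$ via Sobolev/H\"older in terms of $\|\nabla\xi\|_{L^p}=\|\rho u\|_{L^p}$, not a pointwise bound on $\xi$.

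\textbf{The test exponent is too low, and the $Q$ bound fails for small $\gamma$.} Testing with $\rho(\eta+K)^\gamma$ yields $N(t)=\int\rho(\eta+K)^{\gamma+1}$; since $\eta\sim\rho$ for large $\rho$, Gr\"onwall on $N$ gives only $\sup_t\int\rho^{\gamma+2}\le C$, while the pressure term supplies $\int_0^T\!\int\rho^{2\gamma+1}$---time-integrated, not the $\sup_t$ bound that Lemma~\ref{lem 4.1} asserts and Lemma~\ref{lem 4.2} uses. The paper tests with the $(2\gamma-1)$-th power so that the Gr\"onwall quantity $\int\rho f^{2\gamma}$ itself dominates $\int\rho^{2\gamma+1}$. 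Relatedly, your pointwise estimate $|Q|\le C(t)\,r^{-\alpha}$ drawn from the energy-level $L^\gamma$ bound on $\rho$ produces $\alpha=2/\gamma$, which is $\ge 1$ for $\gamma\le 2$ and destroys any integrability against $r\,dr$; the paper instead bounds $\|Q\|_{L^{2\gamma+1}(\Omega_t)}$ by the 2D Sobolev inequality (note $Q(a(t),t)=0$) in terms of $\|\rho\|_{L^{2\gamma+1}}\|\nabla\ub\|_{L^2}^2$, and feeds $\|\rho\|_{L^{2\gamma+1}}$ back into the Gr\"onwall loop.
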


\begin{proof}
	We define $f=(\theta +\xi)_+$ and test the equation \eqref{4-1} with $\rho f^{2\gamma-1}$, then it comes
\begin{equation*}
	\begin{split}
		&\int_{\Omega_t} \rho f^{2\gamma-1}(\partial_t +u\partial_r )(\theta+ \xi )d\xb\\
		=& \frac 1{2\gamma} \int_{\Omega_t} \rho \frac{D}{Dt} f^{2\gamma} d\xb=\frac 1{2\gamma} \frac{d}{dt} \int_{\Omega_t} \rho f^{2\gamma} d\xb
	\end{split}
\end{equation*}
using the assumption \eqref{at}. And so we have
\begin{equation*}
	\begin{split}
		\frac 1{2\gamma} \frac{d}{dt} \int_{\Omega_t} \rho f^{2\gamma} d\xb\le & \int_{\Omega_t} \rho f^{2\gamma-1}\br{ \int^{a(t)}_r \frac{\rho u^2}{s}ds }d\xb\\
		\le & C\norm{\rho}_{L^{2\gamma+1}(\Omega_t)}^\frac 1{2\gamma} \norm{\rho^\frac 1{2\gamma} f }_{L^{2\gamma}(\Omega_t)}^{2\gamma-1}\norm{\int^{a(t)}_r \frac{\rho u^2}{s}ds }_{L^{2\gamma+1}(\Omega_t)},
	\end{split}
\end{equation*}
and here
\begin{equation*}
	\begin{split}
		\norm{\int_{a(t)}^r \frac{\rho u^2}{s}ds }_{L^{2\gamma+1}(\Omega_t)}\leq& \norm{\frac{\rho u^2}{r}}_{L^{\frac{2(2\gamma+1)}{2\gamma+3}}(\Omega_t) } \le \norm{\rho u}_{L^{2\gamma+1}(\Omega_t)}\norm{\nabla \mathbf{u}}_{L^2(\Omega_t)}\\
		\le &  \norm{\rho }_{L^{2\gamma+1}(\Omega_t)}\norm{u}_{L^\infty(\Omega_t)} \norm{\nabla \mathbf{u}}_{L^2(\Omega_t)}\le \norm{\rho }_{L^{2\gamma+1}(\Omega_t)}\norm{\nabla \mathbf{u}}_{L^2(\Omega_t)}^2.
	\end{split}
\end{equation*}
Hence we have that
\begin{equation*}
	\frac{d}{dt}\int_{\Omega_t}\rho f^{2\gamma}d\xb\le C\br{1+\int_{\Omega_t}\rho f^{2\gamma}d\xb +\int_{\Omega_t}\rho^{2\gamma+1}d\xb }\norm{\nabla \mathbf{u} }_{L^2(\Omega_t)}^2.
\end{equation*}
On the other hand, for some $C>1$,
\begin{equation}
\label{rho f}
\begin{split}
	\int_{\Omega_t} \rho^{2\gamma+1}d\xb\le &\int_{{\Omega_t}\cap\set{\rho\le C}} \rho^{2\gamma+1}d\xb+\int_{\Omega_t\cap\set{ \rho\ge C}} \rho(2\mu \ln \rho+\rho)^{2\gamma}d\xb\\
		\le & C\int_{\Omega_t}\rho d\xb+C\int_{\Omega_t\cap\set{\rho\ge C}}\rho (\theta+\xi)_+^{2\gamma}d\xb+C\int_{\Omega_t \cap \set{\rho\ge C}}\rho|\xi|^{2\gamma}d\xb\\
		=& C +C\int_{\Omega_t} \rho f^{2\gamma}d\xb+C\int_{\Omega_t} \rho|\xi |^{2\gamma}d\xb,
\end{split}
\end{equation}
and
\begin{equation*}
	\begin{split}
		\int_{\Omega_t} \rho|\xi |^{2\gamma}d\xb\le & C\norm{\rho }_{L^{\frac {2\gamma+1}{\gamma+1}}(\Omega_t) }\norm{ \xi }_{L^{2(2\gamma+1)}(\Omega_t)}^{2\gamma}\le C\norm{\rho }_{L^{\frac {2\gamma+1}{\gamma+1} }(\Omega_t)}\norm{ \nabla \xi }_{L^{\frac{2\gamma+1}{\gamma+1}}(\Omega_t)}^{2\gamma}\\
        \le &C\norm{ \rho}_{L^{\frac {2\gamma+1}{\gamma+1} }(\Omega_t)}\norm{\rho u}_{L^{\frac{2\gamma+1}{\gamma+1}}(\Omega_t)}^{2\gamma}\le C\norm{\rho }_{L^{\frac{2\gamma+1}{\gamma+1}}(\Omega_t)}\norm{\rho}_{L^{2\gamma+1}(\Omega_t)}^\gamma\norm{\rho^\frac 12 u }_{L^2(\Omega_t)}^{2\gamma}\\
        \le& C\br{\norm{\rho}_{L^1(\Omega_t)}+\norm{\rho}_{L^{2\gamma+1}(\Omega_t)} }\norm{\rho}_{L^{2\gamma+1}(\Omega_t)}^\gamma\norm{\rho^\frac 12 u }_{L^2(\Omega_t)}^{2\gamma}\\
		\le & C+C\norm{\rho}_{L^{2\gamma+1}(\Omega_t)}^{\gamma+1}\le C_\epsilon +\epsilon\int_{\Omega_t}\rho^{2\gamma+1}dx.
	\end{split}
\end{equation*}
so we have that
\begin{equation*}
	\int_{\Omega_t}\rho^{2\gamma+1}d\xb\le C\br{1+\int_{\Omega_t}\rho f^{2\gamma}d\xb }.
\end{equation*}
And we conclude:
\begin{equation*}
	\frac{d}{dt}\int_{\Omega_t}\rho f^{2\gamma}d\xb\le C\br{1+\int_{\Omega_t}\rho f^{2\gamma}d\xb}\norm{\div \mathbf{u} }_{L^2(\Omega_t)}^2
\end{equation*}
and so $\int_{\Omega_t }\rho^{2\gamma+1}d\xb\le C$.
\end{proof}
Based on previous lemma, we can show the higher integrability of the velocity as follows.
\begin{lema}\label{lem 4.2}
	Suppose $\gamma>1$, then there exists a constant $C>0$ depends on $\gamma, T$ and initial data $(\rho_0, \ub_0,a_0)$, such that
	\begin{equation}
    \label{high integrability v}
	\sup_{t\in [0,T]}\int_{\Omega_t}\rho |\ub|^{3}d\xb\le C.
	\end{equation}
\end{lema}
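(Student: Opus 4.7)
Building on the higher density integrability $\sup_t \|\rho\|_{L^{2\gamma+1}(\Omega_t)}\leq C$ from Lemma~\ref{lem 4.1}, the plan is to derive a Gronwall-type differential inequality for $\int_{\Omega_t}\rho|\ub|^3 d\xb$ via a momentum multiplier. Rewriting the momentum equation in its radial non-conservative form $\rho\dot u=\partial_r F$ with $F=(2\mu+\rho)\div\ub-P$, I would multiply by $3u|u|$ and integrate over $\Omega_t$ (with $d\xb=2\pi r\,dr$). The Reynolds transport theorem together with mass conservation (recall $a'(t)=u(a(t),t)$, so the free boundary moves with the velocity $u$) converts the left-hand side into $\tfrac{d}{dt}\int_{\Omega_t}\rho|\ub|^3 d\xb$. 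Integration by parts in $r$ on the right-hand side, using $u(0,t)=0$ and $F(a(t),t)=0$ by \eqref{stress free condition}, combined with the two-dimensional identity $\partial_r(u|u|r)=2r|u|\div\ub-u|u|$ (from $r\div\ub=ru_r+u$), yields
\begin{equation*}
\frac{d}{dt}\int_{\Omega_t}\rho|\ub|^3 d\xb = -6\int_{\Omega_t} F|u|\div\ub\,d\xb + 6\pi\int_0^{a(t)} Fu|u|\,dr.
\end{equation*}

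Substituting $F=(2\mu+\rho)\div\ub-P$ in the bulk integral isolates the coercive piece $-6\int(2\mu+\rho)(\div\ub)^2|u|d\xb\leq 0$, which can be dropped to the good side. The remaining pressure cross-term is handled by Cauchy-Schwarz: $\abs{6\int P|u|\div\ub\,d\xb}\leq C\|u\|_{L^\infty}\|\rho^\gamma\|_{L^2}\|\div\ub\|_{L^2}\leq C\|\div\ub\|_{L^2}^2$, where I use the radial estimate $\|u\|_{L^\infty}\leq\|\div\ub\|_{L^2}$ from Section~\ref{sec 2}, together with $\|\rho^\gamma\|_{L^2}^2=\int\rho^{2\gamma}d\xb\leq C$ (which follows from Lemma~\ref{lem 4.1}, H\"older's inequality, and the boundedness of $a(t)$).

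The main obstacle is the curvature contribution $6\pi\int_0^{a(t)}Fu|u|\,dr=3\int_{\Omega_t}\tfrac{Fu|u|}{r}d\xb$, whose unweighted $r$-integration creates a critical $1/r$ singularity at the symmetry axis. The plan is to split $F=(2\mu+\rho)\div\ub-P$ once more and exploit the two-dimensional Hardy identity $\int(u/r)^2 d\xb\leq\int|\nabla\ub|^2 d\xb\leq C\|\div\ub\|_{L^2}^2$ (since $|\nabla\ub|^2=u_r^2+u^2/r^2$ for radial fields), the Sobolev embedding $W^{1,2(2\gamma+1)/(2\gamma+3)}\hookrightarrow L^{2\gamma+1}$ that already appeared in Lemma~\ref{lem 4.1}, and the weighted H\"older estimate $\int_0^{a(t)}\rho\,dr\leq C$ (which one obtains from $\rho\in L^{2\gamma+1}$ paired with $\int_0^{a(t)} r^{-1/(2\gamma)}dr<\infty$ since $\gamma>1/2$). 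After assembling every sub-piece, the differential inequality should take the form
\begin{equation*}
\frac{d}{dt}\int_{\Omega_t}\rho|\ub|^3 d\xb \leq C\|\div\ub\|_{L^2(\Omega_t)}^2\br{1+\int_{\Omega_t}\rho|\ub|^3 d\xb},
\end{equation*}
and a Gronwall argument with $\int_0^T\|\div\ub\|_{L^2}^2 dt\leq C$ from the energy inequality \eqref{energy inequality} closes the proof. The hardest point will be organizing the split of $F$ inside the curvature term so that each sub-piece folds into either the dissipation, the $L^{2\gamma+1}$ bound of Lemma~\ref{lem 4.1}, or the evolving quantity $\int\rho|\ub|^3 d\xb$ itself without invoking any $L^\infty$ bound on $\rho$ (which is not yet available at this stage).
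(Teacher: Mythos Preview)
Your overall strategy --- multiply the momentum equation by $3|\ub|\ub$, convert the left side to $\tfrac{d}{dt}\int_{\Omega_t}\rho|\ub|^3$ via Reynolds transport, integrate the right side by parts against $F$ using $F|_{r=a(t)}=0$, and close by Gronwall with the energy dissipation --- is exactly the paper's. Your identity
\[
\frac{d}{dt}\int_{\Omega_t}\rho|\ub|^3\,d\xb
=-6\int_{\Omega_t}F|u|\div\ub\,d\xb+3\int_{\Omega_t}\frac{Fu|u|}{r}\,d\xb
\]
is algebraically the same as the paper's starting point $-3\int_{\Omega_t}F\,\div(|\ub|\ub)\,d\xb$, just regrouped.

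The difference is purely in how the viscous piece is organized. The paper does \emph{not} isolate a ``curvature'' term; instead it uses the product rule $\div(|\ub|\ub)=|\ub|\div\ub+\ub\cdot\nabla|\ub|$ and the radial identity $\ub\cdot\nabla|\ub|=|u|\,u_r$ (a genuine first derivative, no $1/r$ weight). After substituting $F=(2\mu+\rho)\div\ub-P$, the only viscous cross term is
\[
-\int_{\Omega_t}(2\mu+\rho)(\div\ub)\,|u|u_r\,d\xb
=-\int_{\Omega_t}(2\mu+\rho)|u|u_r^2\,d\xb-\int_{\Omega_t}(2\mu+\rho)|u|\,\tfrac{uu_r}{r}\,d\xb,
\]
and the paper asserts the second piece is bounded by $\tfrac12\int(2\mu+\rho)|u||\div\ub|^2$, which is absorbed by the dissipation. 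The upshot is a clean inequality $\tfrac{d}{dt}\int\rho|\ub|^3\le C\|\nabla\ub\|_{L^2}^2$ with \emph{no} $\int\rho|\ub|^3$ on the right; the Gronwall step is just a time integration. The pressure term is handled exactly as you propose.

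By contrast, your bulk/curvature split makes the $1/r$ explicit and then tries to beat it down with Hardy, Sobolev, and the weighted bound $\int_0^{a(t)}\rho\,dr\le C$. This is working harder than the paper: once you split $F$ inside the curvature term and apply Young, you are led to $\int_{\Omega_t}(2\mu+\rho)|u|(u/r)^2\,d\xb=\int_{\Omega_t}(2\mu+\rho)|u|^3/r^2\,d\xb$, and none of the tools you list (the $L^2$ Hardy inequality, the $L^{2\gamma+1}$ bound on $\rho$, or $\int_0^{a(t)}\rho\,dr\le C$) controls the $\rho$-weighted part of this near $r=0$ without an $L^\infty$ bound on $\rho$ --- which is not yet available. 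So as written, your curvature plan has a real gap at exactly the place you flagged as ``hardest''. The paper sidesteps this by never producing the bare $(u/r)^2$ term: its grouping keeps the potentially singular factor as $uu_r/r$, paired with the extra good term $-\int(2\mu+\rho)|u|u_r^2$, and claims absorption directly.
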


\begin{proof}
	Multiplying $\eqref{CNS}_2$ by $|\mathbf{u}|\mathbf{u}$ and integrating by parts over $\Omega_t$ implies
\begin{equation*}
	\begin{split}
		&\frac 13\frac{d}{dt}\int_{\Omega_t}\rho |\ub|^3 d\xb+\int_{\Omega_t}(2\mu+\rho)|\mathbf{u}|\div \mathbf{u}|^2d\xb\\
		=& -\int_{\Omega_t}(2\mu+\rho )(\div \ub )\ub\cdot \nabla|\ub|d\xb+ \int_{\Omega_t} P(\rho) \div (|\ub|\ub )d\xb.
	\end{split}
\end{equation*}
Here
\begin{equation*}
	\begin{split}
		&-\int_{\Omega_t} (2\mu+\rho )(\div \ub )\ub\cdot \nabla |\ub|d\xb\\
		=& -\int_{\Omega_t} (2\mu+\rho )\br{\div \ub }\br{\frac \ub{|\ub|}\cdot \nabla |\ub|} |\ub|d\xb
		= -\int_{\Omega_t } (2\mu+\rho )\br{u_r+\frac 1r u }u_r |\ub|d\xb\\
		=&-\int_{\Omega_t} (2\mu+\rho ) u_r^2 |\ub|d\xb-\int_{\Omega_t} (2\mu+\rho )\br{\frac 1r uu_r }|\ub|d\xb\\
		\le & -\int_{\Omega_t} (2\mu+\rho ) u_r^2 |\ub|d\xb+\frac 12 \int_{\Omega_t} (2\mu+\rho )|\div \ub |^2|\ub|d\xb.
	\end{split}
\end{equation*}
and
\begin{equation*}
	\begin{split}
		\int_{\Omega_t} P(\rho)\div (|\ub| \ub)d\xb\le &C\int_{\Omega_t}\rho^\gamma |\ub||\nabla \ub |d\xb\\
		\le & C\norm{\rho}_{L^{2\gamma}(\Omega_t)}^\gamma \norm{\ub}_{L^\infty(\Omega_t)}\norm{\nabla \ub}_{L^2(\Omega_t)}\\
		\le & C\norm{\nabla \ub}_{L^2(\Omega_t)}^2.
	\end{split}
\end{equation*}
Combining together, we have that
\begin{equation*}
	\frac{d}{dt}\int_{\Omega_t}\rho |\ub |^3d\xb\le C\norm{\nabla \ub}_{L^2(\Omega_t)}^2.
\end{equation*}
This yields a bound for $\int_{\Omega_t}\rho |\ub|^3d\xb$.
\end{proof}
Having the above lemmas on hand, we come to the most important part of this paper, which gives the lower and upper bound of the density along the particle paths.
\begin{lema}\label{lem 4.3}
	Let $(\rho ,\ub, a(t))$ be a strong solution of free-boundary value problem \eqref{radial CNS}-\eqref{at}, then there exist a constant $C>0$ depends on $\gamma, (\rho_0, \ub_0,a_0)$ and time $T$, such that along the particle path $r=r_{x_0}(t)$ defined by
	\begin{equation*}
		\begin{dcases}
			\frac{d}{dt}r_{x_0}(t)=u(r_{x_0}(t),t),\\
			r_{x_0}(0)=r_0,x_0=1-\int_{r_0}^{a_0}\rho_0(r)rdr,
		\end{dcases}
	\end{equation*}
	we have upper bound and lower bound for density:
	\begin{equation}
	\label{density bound}
		\rho_0(r_0)e^{-C}\le \rho(r_{x_0}(t),t)\le \rho_0(r_0)e^{C}.
	\end{equation}
\end{lema}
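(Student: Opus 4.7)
The plan is to integrate the key identity \eqref{4-1} along the particle path $r=r_{x_0}(t)$. Since $\partial_t+u\partial_r$ collapses to $\tfrac{d}{dt}$ along the path,
\[
(\theta+\xi)(r_{x_0}(t),t) = (\theta+\xi)(r_0,0) - \int_0^t \Bigl[\int_{a(\tau)}^{r_{x_0}(\tau)}\frac{\rho u^2}{s}\,ds + \rho^\gamma(r_{x_0}(\tau),\tau)\Bigr] d\tau.
\]
The pressure contribution is non-positive and hence helps the upper bound, so the two genuine quantities to control are the space integral $\int \rho u^2/s\,ds$ and the endpoint value $\xi(r_{x_0}(t),t)$.

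Following the Remark, I would estimate $\int_{a(\tau)}^{r_{x_0}(\tau)}\rho u^2/s\,ds$ by pulling out $\norm{\rho(\tau)}_{L^\infty(\Omega_\tau)}$, not the stronger space--time $L^\infty$ norm used in Li--Zhang. Expanding $(u_r+u/s)^2 s$ and using $u(0,\tau)=0$ yields the radial identity $\int_0^{a(\tau)} u^2/s\,ds \leq C\norm{\div\ub}_{L^2}^2(\tau)$, so with $M(\tau):=\norm{\rho(\tau)}_{L^\infty(\Omega_\tau)}$ and $\phi(\tau):=\norm{\div\ub}_{L^2(\Omega_\tau)}^2\in L^1(0,T)$ by the energy inequality,
\[
\Bigl|\int_{a(\tau)}^{r_{x_0}(\tau)}\frac{\rho u^2}{s}\,ds\Bigr|\leq CM(\tau)\phi(\tau).
\]
For the endpoint $\xi$, I would use $|\xi(r,t)|\leq\int_0^{a(t)}\rho|u|\,ds\leq\norm{u}_\infty\int_0^{a(t)}\rho\,ds$; the preliminary lemma gives $\norm{u}_\infty\leq\norm{\div\ub}_{L^2}$, and $\int_0^{a(t)}\rho\,ds$ is uniformly bounded via Hölder interpolation between the conserved mass and the $L^{2\gamma+1}$-integrability of Lemma \ref{lem 4.1}. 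Together with Lemma \ref{lem 4.2}, this delivers an a priori bound on $\xi$ (either uniform in $(r,t)$ or of the form $|\xi|\leq C\sqrt{M(t)\phi(t)}$ absorbable by Young's inequality), which combined with $\rho\leq\theta+C$ for $\rho\geq 1$ (since $\theta=2\mu\ln\rho+\rho$) yields the integral inequality $M(t)\leq C+C\int_0^t M(\tau)\phi(\tau)\,d\tau$. Gronwall together with $\phi\in L^1(0,T)$ gives $M(T)\leq C$.

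Once $\rho$ is bounded above, every term in the integrated identity is uniformly controlled, so $|(\theta+\xi)(r_{x_0}(t),t)-(\theta+\xi)(r_0,0)|\leq C$, and the uniform $|\xi|$ bound upgrades this to $|\theta(r_{x_0}(t),t)-\theta(r_0,0)|\leq C$. Since $\rho\leq C$ and the Lipschitz part $\rho-\rho_0$ of the $\theta$-difference contributes at most $O(1)$, we conclude $2\mu|\ln(\rho/\rho_0)|\leq C$, which is \eqref{density bound}. The main obstacle is the Gronwall closure, because a naive endpoint estimate for $\xi$ would introduce $\sqrt{\phi(t)}\notin L^\infty_t$; the key device highlighted in the Remark is to pair the \emph{instantaneous} $\norm{\rho(\tau)}_{L^\infty_x}$ with $\int u^2/s\,ds\leq C\phi(\tau)$, keeping the Gronwall integrand in the absorbable form $M(\tau)\phi(\tau)$ where $\phi\in L^1_t$ suffices for closure.
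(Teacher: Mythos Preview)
Your overall scheme matches the paper's: integrate \eqref{4-1} along the particle path, control the integral $\int_{a(\tau)}^{r}\rho u^2/s\,ds$ by $\norm{\rho(\tau)}_{L^\infty}\norm{\div\ub(\tau)}_{L^2}^2$, bound the endpoint term $\xi$, and close via Gr\"onwall. The treatment of the $\rho u^2/s$ integral is exactly the paper's, and your reading of the Remark is correct on that point.

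The gap is in the $\xi$ bound. Neither of your two options closes. The first, $|\xi|\le C\sqrt{\phi(t)}$ with $\phi=\norm{\div\ub}_{L^2}^2$, puts an $L^\infty_t$-unbounded term on the right-hand side of the pointwise inequality for $M(t)=\norm{\rho(t)}_{L^\infty}$, so Gr\"onwall never starts. The second, $|\xi|\le C\sqrt{M(t)\phi(t)}$, is not ``absorbable by Young'': Young gives $\tfrac12 M(t)+C\phi(t)$, and the residual $C\phi(t)$ is again only in $L^1_t$, not $L^\infty_t$, so the pointwise inequality $M(t)\le C+C\phi(t)+C\int_0^t M\phi\,d\tau$ still fails. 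Your final paragraph correctly identifies the obstacle but then points back to the $\rho u^2/s$ estimate, which is a different term and already under control; it does not fix $\xi$.

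The paper's resolution is a specific H\"older splitting that uses Lemma~\ref{lem 4.2} to produce a bound \emph{sublinear in $M$ with no $\phi$ factor}:
\[
|\xi(r,t)|\le \int_0^{a(t)}(\rho|u|^3 r)^{1/3}\,\rho^{2/3}\,r^{-1/3}\,dr
\le C\,\norm{\rho(t)}_{L^\infty}^{2/3}\Bigl(\int_0^{a(t)}\rho|u|^3 r\,dr\Bigr)^{1/3}\Bigl(\int_0^{a(t)}r^{-1/2}\,dr\Bigr)^{2/3}
\le C\,M(t)^{2/3}.
\]
Here the exponents are $(3,\infty,3/2)$; the $\rho^{2/3}$ factor is pulled out in $L^\infty$, which is what generates $M^{2/3}$, and the $r^{-1/2}$ integral is finite since $a(t)\le C(1+T)^{1/2}$. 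Because $2/3<1$, Young now gives $CM^{2/3}\le \tfrac12 M+C$ with a genuinely bounded remainder, and Gr\"onwall closes. You invoke Lemma~\ref{lem 4.2} by name but never carry out this splitting; that is the missing idea.
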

\begin{proof}
	Consider the particle path $r=r_{x_0}(t)$ and \eqref{4-1}, we have
\begin{equation*}
	\begin{split}
	\frac{d}{dt}(\theta+\xi)(r_{x_0}(t),t)+P(\rho)(r_{x_0}(t),t)+\int_{a(t)}^{r_{x_0}(t)}\frac{\rho u^2}{s}ds=0,
	\end{split}
\end{equation*}
which yields that:
\begin{equation}\label{4-6}
	\begin{split}
		&2\mu \log \frac{\rho(r_{x_0}(t),t)}{\rho_0(r_0)}+\rho(r_{x_0}(t),t)-\rho_0(r_0)+\xi(r_{x_0}(t),t)-\xi(r_0)+\int_0^tP(\rho)d\tau\\
		 = &  \int_0^t \br{\int^{a(\tau)}_r \frac{\rho u^2}{s}ds} d\tau
		\leq C\int_0^t \norm{\rho(\tau)}_{L^\infty(\Omega_\tau)}\norm{\nabla \mathbf{u}}_{L^2(\Omega_\tau)}^2d\tau.
	\end{split}
\end{equation}
Next, by using Lemma \ref{lem 4.2} and the bound \eqref{at lower bound} of $a(t)$, one obtains
\begin{equation}
\label{xi estimate}
	\begin{split}
		|\xi(r_{x_0}(t),t)|\le \int^{a(t)}_0 \abs{\rho u }dr \le & C\int_0^{a(t)} (\rho |u|^{3} r )^{\frac 1{3} }\cdot \rho^{\frac{2}{3} } \cdot \br{r^{-\frac{1}{2}}}^\frac{2}{3}dr\\
		\le & C\norm{\rho(t)}_{L^\infty(\Omega_t)}^{\frac{2}{3}} \br{\int_0^{a(t)} \rho |u|^{3}rdr }^{\frac{1}{3} }\br{\int_0^{a(t)} r^{-\frac 1{2}}dr }^{\frac{2}{3}}\\
		\le & C \norm{\rho(t)}_{L^\infty(\Omega_t)}^{\frac{2}{3}}.
	\end{split}
\end{equation}
If $\log \frac{\rho(r_{x_0}(t),t)}{\rho_0(r_0)}<0$, then
\begin{align*}
    \rho(r_{x_0}(t),t)<\rho_0(r_0)\leq \norm{\rho_0}_{L^\infty(\Omega_0)}.
\end{align*}
If $\log \frac{\rho(r_{x_0}(t),t)}{\rho_0(r_0)}\geq0$, by using \eqref{4-6} we have that
\begin{equation*}
	\begin{split}
		\rho(r_{x_0}(t),t)
		\le &C+\norm{\rho(t)}_{L^\infty(\Omega_t)}^{\frac{2}{3}}+C\int_0^t \norm{\rho(\tau)}_{L^\infty(\Omega_\tau)}\norm{\nabla \mathbf{u}}_{L^2(\Omega_\tau)}^2d\tau\\
		\le &C+\frac 12 \norm{\rho(t)}_{L^\infty(\Omega_t)}+C\int_0^t \norm{\rho(\tau)}_{L^\infty(\Omega_\tau)}\norm{\nabla \mathbf{u}}_{L^2(\Omega_\tau)}^2d\tau.
	\end{split}
\end{equation*}
Therefore, take supreme on the left hand, we get
\begin{equation*}
	\begin{split}
		\norm{\rho(t)}_{L^\infty(\Omega_t)}\le C +C\int_0^t \norm{\rho(\tau)}_{L^\infty(\Omega_\tau)}\norm{\nabla \mathbf{u}}_{L^2(\Omega_\tau)}^2d\tau.
	\end{split}
\end{equation*}
Use the Gr\"onwall inequality, we obtain that 
$\norm{\rho(t)}_{L^\infty(\Omega_t)}\le C,$ and take supreme about $t\in[0,T]$, we get
$$\sup_{0\leq t\leq T}\norm{\rho(t)}_{L^\infty(\Omega_t)}\le C$$
Together with \eqref{4-6} , it comes that
\begin{equation*}
	2\mu \log \frac{\rho(r_{x_0}(t),t)}{\rho_0(r_0)}\le C,
\end{equation*}
which implies $$\rho(r_{x_0}(t),t)\le \rho_0(r_0)e^C.$$

Applying this bound to \eqref{4-6} again, we can obtain the lower bound
$$\rho(r_{x_0}(t),t)\ge \rho_0(r_0)e^{-C}.$$
The proof is now finished.
\end{proof}

\section{Proof of Theorem \ref{global existence}}
\label{sec 5}
In fact, we can obtain the global existence of strong solutions through Lemma \ref{lem 4.3}, and the higher-order estimates for $\rho$ and $u$ are the same as in \cite{2016 Li-Zhang-JDE}. But for the completeness of the article, we still follow the idea of \cite{2016 Li-Zhang-JDE} to obtain higher-order estimates.

We rewrite the equations \eqref{radial CNS} in Lagrangian mass coordinate:
\begin{align*}
x(r,t)=\int_0^r\rho sds, \quad \tau(r,t)=t.
\end{align*}
The transformation turns $[0,a(t)]\times[0,T]$ into $[0,1]\times[0,T]$ and satisfies
\begin{align*}
\frac{\partial x}{\partial r}=\rho r,\quad \frac{\partial x}{\partial t}=-\rho ur,\quad \frac{\partial \tau}{\partial r}=0, \quad \frac{\partial \tau}{\partial t}=1.
\end{align*}
Then the free boundary problem \eqref{radial CNS}-\eqref{at} is transformed into the following fixed boundary problem:
\begin{align}\label{5-1}
\left\{ 
\begin{array}{lc}
\rho_\tau+\rho^2(ru)_x=0,\\
\frac{1}{r}u_\tau+(\rho^\gamma-(2\mu+\rho)\rho(ru)_x)_x=0,
\end{array}
\right.
\end{align}
which $(x,\tau)\in[0,1]\times[0,T]$, with the initial data and boundary conditions 
\begin{align*}
(\rho, u)(x,0)=(\rho_0,u_0),\quad x\in[0,1],\\
u(0,\tau)=0,\quad (\rho^\gamma-(2\mu+\rho)\rho(ru)_x)(0,\tau)=0,\quad \tau\in[0,T],
\end{align*}
where $r=r(x,\tau)$ is defined by
\begin{align*}
\frac{d}{d\tau}r(x,\tau)=u(x,\tau), \quad x\in[0,1], \tau\in[0,T].
\end{align*}
The energy estimate in Lagrangian coordinate is that
\begin{align*}
\sup_{0\leq \tau\leq T}\int_0^1\left(|u|^2+\dfrac{1}{\gamma-1}\rho^{\gamma-1}\right)dx+\int_0^T\int_0^1(2\mu+\rho)\rho|(ru)_x|^2dxd\tau\leq E_0.
\end{align*}
By H\"{o}lder inequality and the standard energy estimate, we get
\begin{align*}
1=\int_{\Omega_t}\rho dx\leq \left(\int_{\Omega_t}\rho^\gamma dx\right)^{\frac{1}{\gamma}}|\Omega_{t}|^{1-\frac{1}{\gamma}}\leq C(\gamma-1)E_0 a(t)^{2-\frac{2}{\gamma}},
\end{align*}
which implies there exists a constant $\alpha>0$ depending only on $E_0$ and $\gamma$ such that $\forall t\in[0,T]$, $a(t)\geq 3\alpha$.

For the high-order estimation below, we need to introduce cut-off functions $\zeta$ and $\chi$. Let $\zeta\in C^\infty$ be a smooth function of $r$ satisfying
\begin{align*}
\begin{split}
0\leq \zeta\leq 1; \quad \zeta=1, r\in[0,\alpha]; \quad \text{supp}(\zeta)\subset [0,2\alpha];\\
\zeta=0, r\in[2\alpha,a(t)], \quad \forall t\in[0,T],\quad |\zeta'|\leq\frac{2}{\alpha} \quad \text{and} \quad|\zeta''|\leq \frac{10}{\alpha^2}.
\end{split}
\end{align*}
Due to Lemma \ref{lem 4.3}, there exists a constant $x_1$ depending only on $T,\gamma,E_0$ such that 
\begin{align*}
0<x_1\leq \int_0^\alpha \rho rdr<1, \quad 0\leq t\leq T.
\end{align*}
Also, we define a smooth function $\chi$ of Lagrangian coordinates $x$ such that
\begin{align*}
\begin{split}
0\leq \chi\leq 1; \quad \chi=1, x\in[x_1,1]; \quad \text{supp}(\chi)\subset [x_0,1];\\
\chi=0, x\in[0,x_0], \quad \forall \tau\in[0,T],\quad |\chi'|\leq\frac{2}{x_1-x_0},
\end{split}
\end{align*}
which $0<x_0<x_1<1.$

\begin{prop}
	There exists a constant $C$ depending on $\gamma,T,x_0$ and initial data such that the following estimate holds:
	\begin{equation}
	\label{5-2}
		\sup_{t\in [0,T]}\int_{\Omega_t}F^2 d\xb +\int_0^T \int_{\Omega_t} \rho |\dot \ub |^2 d\xb dt\le C,
	\end{equation}
\end{prop}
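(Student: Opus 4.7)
The plan is to perform a Hoff-type energy identity, exploiting two structural features of the radial stress-free problem. First, the radial momentum equation in \eqref{radial CNS} can be rewritten as $\rho\dot u=F_r$, with $\dot u=u_t+uu_r$. Second, the stress-free condition \eqref{stress free condition} reads precisely $F(a(t),t)=0$, while $u(0,t)=0$ forces $\dot u(0,t)=0$. Consequently, multiplying $\rho\dot u=F_r$ by $\dot{\ub}$ and integrating over $\Omega_t$, all boundary contributions vanish and I would obtain
\begin{equation*}
\int_{\Omega_t}\rho|\dot{\ub}|^2\,d\xb=-\int_{\Omega_t}F\,\div\dot{\ub}\,d\xb.
\end{equation*}

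A direct computation in radial coordinates gives the commutator
\begin{equation*}
\div\dot{\ub}=\frac{D}{Dt}\div\ub+u_r^2+(d-1)\frac{u^2}{r^2}=\frac{D}{Dt}\div\ub+|\D\ub|^2,
\end{equation*}
which, combined with $\dot\rho=-\rho\div\ub$ and $\frac{DP}{Dt}=-\gamma P\div\ub$, rearranges to
\begin{equation*}
(2\mu+\rho)\div\dot{\ub}=\frac{DF}{Dt}+\rho(\div\ub)^2-\gamma P\div\ub+(2\mu+\rho)|\D\ub|^2.
\end{equation*}
Dividing by $2\mu+\rho$ (which, thanks to Lemma \ref{lem 4.3}, is pinched between $2\mu$ and $2\mu+\norm{\rho}_{L^\infty(\Omega_t)}$), and then applying the Reynolds transport theorem to recognize the $F\,(DF/Dt)/(2\mu+\rho)$ piece as a time derivative, I would arrive at the core identity
\begin{equation*}
\frac{1}{2}\frac{d}{dt}\int_{\Omega_t}\frac{F^2}{2\mu+\rho}\,d\xb+\int_{\Omega_t}\rho|\dot{\ub}|^2\,d\xb=\sum_j R_j,
\end{equation*}
in which each $R_j$ is a trilinear expression in $F,\,\div\ub,\,P,\,\D\ub$ with uniformly bounded coefficients.

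It then remains to estimate the $R_j$. Terms of type $\int F^2\div\ub$ and $\int FP\div\ub$ are controlled by Cauchy--Schwarz together with $P\le C$ from Lemma \ref{lem 4.3} and the energy bound $\int_0^T\norm{\div\ub}_{L^2(\Omega_t)}^2dt\le C$ from \eqref{energy inequality}, producing contributions absorbable into $C\,\norm{\nabla\ub}_{L^2(\Omega_t)}^2(1+\norm{F}_{L^2(\Omega_t)}^2)$. The delicate term is $-\int F|\D\ub|^2$, which is cubic in first derivatives of $\ub$. To handle it I would combine the two-dimensional Gagliardo--Nirenberg inequality $\norm{\nabla\ub}_{L^4}^2\le C\norm{\nabla\ub}_{L^2}\norm{\nabla^2\ub}_{L^2}$ with the elliptic estimate for the Lam\'e-type operator supplied by $\rho\dot{\ub}=\div\S$, namely $\norm{\nabla^2\ub}_{L^2}\le C\br{\norm{\sqrt\rho\,\dot{\ub}}_{L^2}+\text{lower-order terms}}$, available once Lemma \ref{lem 4.3} gives uniform ellipticity of $2\mu+\rho$. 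A small-$\varepsilon$ Cauchy absorbs the $\norm{\sqrt\rho\dot{\ub}}_{L^2}^2$ piece into the left-hand side, leaving a residue of the form $C\norm{F}_{L^2(\Omega_t)}^2\norm{\nabla\ub}_{L^2(\Omega_t)}^2$. Gronwall, with integrable weight $\norm{\nabla\ub}_{L^2(\Omega_t)}^2$, then closes the argument.

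The main obstacle is precisely this cubic term $-\int F|\D\ub|^2$. At this stage of the bootstrap $\nabla\rho$ is known only in $L^q$ and $\nabla\ub$ only in $L^2_x$, so the elliptic estimate has to be set up carefully, using $\norm{\rho}_{L^\infty(\Omega_t)}$ from Lemma \ref{lem 4.3} in place of a stronger $L^\infty_tL^\infty_x$ bound (as highlighted in the introductory remark) to keep every cubic term absorbable. Lemma \ref{lem 4.3} is indispensable both for the uniform ellipticity of the Lam\'e operator and for the pressure control needed to bound the lower-order $R_j$.
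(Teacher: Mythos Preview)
Your overall strategy---multiply $\rho\dot{\ub}=\nabla F$ by $\dot{\ub}$, integrate by parts using $F(a(t),t)=0$, and recognize the $F\,\tfrac{DF}{Dt}/(2\mu+\rho)$ piece as a time derivative---is sound and is indeed the backbone of the paper's argument. The gap is in your treatment of the cubic term $-\int_{\Omega_t}F\,|\D\ub|^2\,d\xb$. The elliptic estimate you invoke,
\[
\norm{\nabla^2\ub}_{L^2(\Omega_t)}\le C\bigl(\norm{\sqrt\rho\,\dot\ub}_{L^2(\Omega_t)}+\text{l.o.t.}\bigr),
\]
is \emph{not} available at this point of the bootstrap. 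Uniform ellipticity of $2\mu+\rho$ (from Lemma~\ref{lem 4.3}) yields only $H^1$-regularity for the Lam\'e system; an $H^2$ bound would require control of $\nabla\rho$, which is established only in the last proposition and itself relies on \eqref{5-2}. Equivalently, if you try to reach $\norm{\nabla\ub}_{L^4}$ via $\norm{\div\ub}_{L^4}\le C\norm{F}_{L^4}+C$ and then Gagliardo--Nirenberg on $F$ (which \emph{is} legitimate, since $\nabla F=\rho\dot\ub$), you run into the fact that the div--curl inequality $\norm{\nabla\ub}_{L^4}\le C\norm{\div\ub}_{L^4}$ for curl-free fields holds on $\R^2$ but not on the ball $\Omega_t$ without boundary conditions on $\ub$ at $r=a(t)$.

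This is precisely why the paper does \emph{not} attempt a single global identity. It introduces two cut-offs: an interior cut-off $\zeta$ supported in a fixed ball $B_{2\alpha}$, and a Lagrangian cut-off $\chi$ supported away from the origin. Near the boundary the estimate is carried out in Lagrangian coordinates (multiplying by $ru_\tau\chi^2$), where the $1/r$ singularity is harmless. Near the origin the cubic term is written as $-2\int F\,\nabla u_1\!\cdot\!\nabla^\perp u_2\,\zeta^2$; after localization to $\R^2$ one uses the div--curl structure to place $\nabla(\zeta u_1)\!\cdot\!\nabla^\perp(\zeta u_2)$ in the Hardy space $\mathcal{H}^1$ and pairs it with $\norm{F\zeta}_{BMO}\le C\norm{\nabla(F\zeta)}_{L^2}\le C(\norm{\sqrt\rho\,\dot\ub\,\zeta}_{L^2}+1)$. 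The two localized estimates are then glued using $\zeta+\chi\ge 1$. Your proposal would become correct if you replaced the unavailable $H^2$ elliptic estimate by this localization plus $\mathcal H^1$--$BMO$ argument.
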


\begin{proof}
We start with the boundary estimates in the Lagrangian coordinates. Multiplying $\eqref{5-1}_2$ by $ru_\tau\chi^2$ and integrating $x$ over $[0,1]$, we have
\begin{align}\label{5-3}
\int_0^1u_\tau^2\chi^2dx+\int_0^1F(ru_\tau)_x\chi^2dx=-2\int_0^1Fru_\tau \chi'\chi dx.
\end{align}
Note that $F=(2\mu+\rho)\rho(ru)_x-\rho^\gamma$ and $(ru)_x=\dfrac{F+\rho^\gamma}{(2\mu+\rho)\rho},$ we calculate
\begin{align*}
\begin{split}
(ru_\tau)_x&=(ru)_{\tau x}-2uu_x\\
&=\dfrac{F_\tau}{(2\mu+\rho)\rho}+\dfrac{(\rho^\gamma)_\tau}{(2\mu+\rho)\rho}+\left(\dfrac{1}{(2\mu+\rho)\rho}\right)_\tau (F+\rho^\gamma)-2uu_x.
\end{split}
\end{align*}
Substituting the above equality into \eqref{5-3}, we get
\begin{align}\label{5-4}
\begin{split}
&\quad\int_0^1u_\tau^2\chi^2dx+\dfrac{1}{2}\dfrac{d}{d\tau}\int_0^1\dfrac{F^2}{(2\mu+\rho)\rho}\chi^2dx\\
&=-2\int_0^1Fru_\tau \chi'\chi dx-\frac{1}{2}\int_0^1 F^2\left(\dfrac{1}{(2\mu+\rho)\rho}\right)_\tau\chi^2dx\\
&\quad-\int_0^1F\left(\left(\dfrac{1}{(2\mu+\rho)\rho}\right)_\tau\rho^\gamma+\dfrac{(\rho^\gamma)_\tau}{(2\mu+\rho)\rho}\right)\chi^2dx+2\int_0^1Fuu_x\chi^2\\
&:=I_1+I_2+I_3+I_4.
\end{split}
\end{align}
For the first term, we have
\begin{align}\label{5-5}
\begin{split}
|I_1|&\leq C\norm{u_\tau\chi}_{L^2(0,1)}\norm{Fr\chi'}_{L^2(0,1)}\\
&\leq \dfrac{1}{4}\norm{u_\tau\chi}_{L^2(0,1)}^2+C\norm{\sqrt{\rho}(ru)_x}_{L^2(0,1)}^2+C.
\end{split}
\end{align}
Next, by the Gagliardo-Nirenberg inequality and \eqref{5-1}, we get
\begin{align}\label{5-6}
\begin{split}
|I_2|&\leq C\int_0^1\frac{F^2}{\sqrt{\rho}}\sqrt{\rho}(ru)_x\chi^2dx\\
&\leq C\norm{F\chi}_{L^2(0,1)}^{\frac{1}{2}}\left(\norm{F\chi'}_{L^2(0,1)}^{\frac{1}{2}}+\norm{F_x\chi}_{L^2(0,1)}^{\frac{1}{2}}\right)\norm{\frac{F\chi}{\sqrt{\rho}}}_{L^2(0,1)}\norm{\sqrt{\rho}(ru)_x}_{L^2(0,1)}\\
&\leq \dfrac{1}{4}\norm{u_\tau\chi}_{L^2(0,1)}^2+C\left(1+\norm{\frac{F\chi}{\sqrt{\rho}}}_{L^2(0,1)}^2\right)\left(1+\norm{\sqrt{\rho}(ru)_x}_{L^2(0,1)}^2\right).
\end{split}
\end{align}
Next, the H\"{o}lder inequality yields that
\begin{align}\label{5-7}
\begin{split}
|I_3|&\leq C\int_0^1\frac{|F|}{\sqrt{\rho}}\sqrt{\rho}(ru)_x\chi^2dx\leq C\norm{\frac{F\chi}{\sqrt{\rho}}}_{L^2(0,1)}\norm{\sqrt{\rho}(ru)_x}_{L^2(0,1)}\\
&\leq C\norm{\frac{F\chi}{\sqrt{\rho}}}_{L^2(0,1)}^2\norm{\sqrt{\rho}(ru)_x}_{L^2(0,1)}^2+C.
\end{split}
\end{align}
Finally, note that $\norm{u}_{L^\infty(0,a(t))}\leq\norm{\div \mathbf{u}}_{L^2(\Omega_{t})}=\norm{\sqrt{\rho}(ru)_x}_{L^2(0,1)},$ we get
\begin{align}\label{5-8}
\begin{split}
|I_4|&\leq C\left|\int_0^1\frac{1}{r} u(ru)_xF\chi^2dx\right|+C\left|\int_0^1\frac{r_x}{r}u^2F\chi^2dx\right|\\
&\leq C\norm{u}_{L^\infty(0,1)}\norm{\frac{F\chi}{\sqrt{\rho}}}_{L^2(0,1)}\norm{\sqrt{\rho}(ru)_x}_{L^2(0,1)}+C\left|\int_0^1 u^2\frac{1}{\rho r^2}F\chi^2dx\right|\\
&\leq C\norm{u}_{L^\infty(0,1)}\norm{\frac{F\chi}{\sqrt{\rho}}}_{L^2(0,1)}\norm{\sqrt{\rho}(ru)_x}_{L^2(0,1)}+C\norm{\frac{F\chi}{\sqrt{\rho}}}_{L^2(0,1)}\norm{u}_{L^\infty(0,1)}^2\norm{\frac{1}{\sqrt{\rho}}}_{L^2(0,1)}\\
&\leq C\left(1+\norm{\frac{F\chi}{\sqrt{\rho}}}_{L^2(0,1)}^2\right)\left(1+\norm{\sqrt{\rho}(ru)_x}_{L^2(0,1)}^2\right),
\end{split}
\end{align}
where we used the simple fact that if $x \geq x_0$, then there exists a constant $r_0>0$ such that $r\geq r_0$.

Substituting the all above estimates \eqref{5-5}-\eqref{5-8} into \eqref{5-4}, we get
\begin{align*}
\begin{split}
  &\quad\int_0^1u_\tau^2\chi^2dx+\dfrac{1}{2}\dfrac{d}{d\tau}\int_0^1\dfrac{F^2}{(2\mu+\rho)\rho}\chi^2dx \\
   &\leq C\left(1+\norm{\frac{F\chi}{\sqrt{(2\mu+\rho)\rho}}}_{L^2(0,1)}^2\right)\left(1+\norm{\sqrt{\rho}(ru)_x}_{L^2(0,1)}^2\right).  
\end{split}
\end{align*}
Using Gronwall's inequality, we get
\begin{align}\label{5-9}
\sup_{0\leq \tau\leq T}\int_0^1\dfrac{F^2}{(2\mu+\rho)\rho}\chi^2dx+\int_0^T\int_0^1u_\tau^2\chi^2dxd\tau\leq C.
\end{align}

Next, we deal with the interior estimates in Eulerian coordinates.

Multiplying the momentum equation $\eqref{CNS}_
2$ by $\zeta^2\dot{\mathbf{u}}$ and integrating the resulting equality over $\Omega_t$, we have 
\begin{align}\label{5-10}
\begin{split}
\int_{\Omega_t}\rho|\dot{\mathbf{u}}|^2\zeta^2dx=\int_{\Omega_t}\nabla F\cdot \zeta^2\dot{\mathbf{u}}dx=-\int_{\Omega_t} F\div \dot{\mathbf{u}}\zeta^2dx-\int_{\Omega_t} F\nabla(\zeta^2)\cdot\dot{\mathbf{u}}dx,
\end{split}
\end{align}
which we used the stress-free boundary condition \eqref{stress free condition}.
 Note that 
\begin{align*}
\begin{split}
\div \dot{\mathbf{u}}&=\frac{D}{Dt}\div \mathbf{u}+|\div \mathbf{u}|^2-2\nabla u_1\cdot\nabla^\perp u_2\\
&= \dfrac{D}{Dt}\left(\dfrac{F+P}{2\mu+\rho}\right)+|\div \mathbf{u}|^2-2\nabla u_1\cdot\nabla^\perp u_2
\end{split}
\end{align*}
Then substituting the above equality into \eqref{5-10}, we get
\begin{align}\label{5-11}
\begin{split}
&\quad\int_{\Omega_t}\rho|\dot{\mathbf{u}}|^2\zeta^2dx+\frac{1}{2}\frac{d}{dt}\int_{\Omega_t}\frac{F^2}{2\mu+\rho}\zeta^2dx\\
&=\dfrac{1}{2}\int_{\Omega_t}F^2\div \mathbf{u}\left(\rho\left(\dfrac{1}{2\mu+\rho}\right)'-\dfrac{1}{2\mu+\rho}\right)\zeta^2dx\\
&\quad-\int_{\Omega_t}F\div \mathbf{u}\left(\rho\left(\dfrac{\rho^\gamma}{2\mu+\rho}\right)'-\dfrac{\rho^\gamma}{2\mu+\rho}\right)\zeta^2dx\\
&\quad-2\int_{\Omega_t}F\nabla u_1\cdot\nabla^\perp u_2\zeta^2dx-\int_{\Omega_t}F\zeta\nabla\zeta\cdot\dot{\mathbf{u}}dx\\
&\leq C\int_{\Omega_t} F^2|\div\mathbf{u}|\zeta^2dx+C\int_{\Omega_t} |F||\div\mathbf{u}|\zeta^2dx\\
&\quad+C\left|\int_{\Omega_t}F\nabla u_1\cdot\nabla^\perp u_2\zeta^2dx\right|+C\left|\int_{\Omega_t}F\zeta\nabla\zeta\cdot\dot{\mathbf{u}}dx\right|\\
&:=J_1+J_2+J_3+J_4.
\end{split}
\end{align}
For the first term,
\begin{align*}
\begin{split}
|J_1|&\leq C\norm{\div \mathbf{u}}_{L^2(\Omega_t)}\norm{F\zeta}_{L^4(\Omega_t)}^2\\
&\leq C\norm{\div \mathbf{u}}_{L^2(\Omega_t)}\norm{F\zeta}_{L^2(\Omega_t)}\norm{\nabla (F\zeta)}_{L^2(\Omega_t)}\\
&\leq \frac{1}{2}\norm{\sqrt{\rho}\dot{\mathbf{u}}\zeta}_{L^2(\Omega_{t})}^2+C\norm{\div \mathbf{u}}_{L^2(\Omega_t)}^2\norm{F\zeta}_{L^2(\Omega_t)}^2\\
&\quad+C\norm{\div \mathbf{u}}_{L^2(\Omega_t)}\norm{F\zeta}_{L^2(\Omega_t)}\norm{F\nabla \zeta}_{L^2(\Omega_t)}\\
&\leq \frac{1}{2}\norm{\sqrt{\rho}\dot{\mathbf{u}}\zeta}_{L^2(\Omega_{t})}^2+C\norm{\div \mathbf{u}}_{L^2(\Omega_t)}^2\norm{\frac{F\zeta}{\sqrt{2\mu+\rho}}}_{L^2(\Omega_t)}^2+C,
\end{split}
\end{align*}
where we used $\norm{F\nabla \zeta}_{L^2(\Omega_t)}\leq C$, it is because \eqref{5-9} and $\text{supp}\{\nabla\zeta\}\subset B_{2\alpha}-B_{\alpha}$.\\
Next, the H\"{o}lder inequality yields that
\begin{align*}
|J_2|\leq C \norm{\div \mathbf{u}}_{L^2(\Omega_t)}\norm{\frac{F\zeta}{\sqrt{2\mu+\rho}}}_{L^2(\Omega_t)}.
\end{align*}
Next, the third term can be estimated as
\begin{align*}
\begin{split}
|J_3|&\leq\left|\int_{\Omega_t}F\nabla (\zeta u_1)\cdot\nabla^\perp (\zeta u_2)dx\right|+\left|\int_{\Omega_t}F(\nabla u_1\cdot\nabla^\perp u_2)\zeta^2-\nabla (\zeta u_1)\cdot\nabla^\perp (\zeta u_2))dx\right|\\
&\leq \left|\int_{\mathbb{R}^2}F\zeta\nabla (\zeta u_1)\cdot\nabla^\perp (\zeta u_2)dx\right|+\left|\int_{\mathbb{R}^2}F(1-\zeta)\nabla (\zeta u_1)\cdot\nabla^\perp (\zeta u_2)dx\right|\\
&\quad +\left|\int_{\Omega_t}F\zeta\nabla\zeta\cdot x\dfrac{u^2}{r^2}dx\right|.\\
\end{split}
\end{align*}
Computing that
\begin{align*}
\begin{split}
\left|\int_{\mathbb{R}^2}F\zeta\nabla (\zeta u_1)\cdot\nabla^\perp (\zeta u_2)dx\right|&\leq C\norm{F\zeta}_{BMO}\norm{\nabla (\zeta u_1)\cdot\nabla^\perp (\zeta u_2)}_{\mathcal{H}^1}\\
&\leq C\norm{\nabla(F\zeta)}_{L^2(\Omega_{t})}\norm{\nabla(\zeta\mathbf{u})}_{L^2(\Omega_{t})}^2\\
&\leq C(\norm{\sqrt{\rho}\dot{\mathbf{u}}\zeta}_{L^2(\Omega_{t})}+1)(\norm{\zeta\div\mathbf{u}}_{L^2(\Omega_{t})}^2+\norm{\mathbf{u}\nabla\zeta}_{L^2(\Omega_{t})}^2)\\
&\leq \dfrac{1}{4}\norm{\sqrt{\rho}\dot{\mathbf{u}}\zeta}_{L^2(\Omega_{t})}^2+C\norm{\div\mathbf{u}}_{L^2(\Omega_{t})}^4+C\\
&\leq \dfrac{1}{4}\norm{\sqrt{\rho}\dot{\mathbf{u}}\zeta}_{L^2(\Omega_{t})}^2+C\norm{\div \mathbf{u}}_{L^2(\Omega_t)}^2\left(\norm{\frac{F\zeta}{\sqrt{2\mu+\rho}}}_{L^2(\Omega_t)}^2+1\right)+C,
\end{split}
\end{align*}
which we used $\zeta+\chi\geq 1$ and \eqref{5-9},\\
\begin{align*}
\begin{split}
&\quad\left|\int_{\mathbb{R}^2}F(1-\zeta)\nabla (\zeta u_1)\cdot\nabla^\perp (\zeta u_2)dx\right|\\
&\leq C\int_{B_{2\alpha}-B_{\alpha}}|F|\left|\partial_r(\zeta u)\dfrac{\zeta u}{r}\right|dx\\
&\leq C\int_{B_{2\alpha}-B_{\alpha}}|F|\zeta u^2dx+C\int_{B_{2\alpha}-B_{\alpha}} |F|\zeta^2|u\partial_ru|dx\\
&\leq C\norm{\div \mathbf{u}}_{L^2(\Omega_t)}^2\norm{\frac{F\zeta}{\sqrt{2\mu+\rho}}}_{L^2(\Omega_t)}\\
&\leq C\norm{\div \mathbf{u}}_{L^2(\Omega_t)}^2\left(\norm{\frac{F\zeta}{\sqrt{2\mu+\rho}}}_{L^2(\Omega_t)}^2+1\right),
\end{split}
\end{align*}
and
\begin{align*}
\left|\int_{\Omega_t}F\zeta\nabla\zeta\cdot x\dfrac{u^2}{r^2}dx\right|\leq  C\norm{\div \mathbf{u}}_{L^2(\Omega_t)}^2.
\end{align*}
Thus
\begin{align*}
|J_3|\leq \frac{1}{4}\norm{\sqrt{\rho}\dot{\mathbf{u}}\zeta}_{L^2(\Omega_{t})}^2+C\left(\norm{\div \mathbf{u}}_{L^2(\Omega_t)}^2+1\right)\left(\norm{\frac{F\zeta}{\sqrt{2\mu+\rho}}}_{L^2(\Omega_t)}^2+1\right).
\end{align*}
Then substituting the above equality into \eqref{5-11}, we get
\begin{align*}
\begin{split}
&\quad\frac{1}{2}\frac{d}{dt}\int_{\Omega_t}\frac{F^2}{2\mu+\rho}\zeta^2dx+\int_{\Omega_t}\rho|\dot{\mathbf{u}}|^2\zeta^2dx\\
&\leq C\left(\norm{\div \mathbf{u}}_{L^2(\Omega_t)}^2+1\right)\left(\norm{\frac{F\zeta}{\sqrt{2\mu+\rho}}}_{L^2(\Omega_t)}^2+1\right)+\norm{\dot{\mathbf{u}}\nabla\zeta}_{L^2(\Omega_{t})}^2
\end{split}\label{5-12}
\end{align*}
By Gronwall's inequality, we obtain
\begin{align}
\sup_{0\leq t\leq T}\frac{1}{2}\frac{d}{dt}\int_{\Omega_t}\frac{F^2}{2\mu+\rho}\zeta^2dx+\int_0^T\int_{\Omega_t}\rho|\dot{\mathbf{u}}|^2\zeta^2dxdt\leq C,
\end{align}
which together with \eqref{5-9} complete the proof.
\end{proof}

\begin{prop}
	There exists a constant $C$ depending on $\gamma,T,\xb_0$ and initial data such that the following estimate holds:
	\begin{equation}
	\label{U estimate}
		\sup_{t\in [0,T]}\int_{\Omega_t}\rho |\dot \ub |^2 d\xb +\int_0^T \int_{\Omega_t} (2\mu+\rho)|\div \dot \ub |^2 d\xb dt\le C.
	\end{equation}
\end{prop}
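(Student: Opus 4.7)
The strategy is the standard Hoff-type material-derivative estimate, adapted to the stress-free radial setting. In the spherically symmetric case we have $\curl\ub=0$, so the momentum equation collapses to the clean identity $\rho\dot\ub=\nabla F$. I would first apply the operator $D/Dt=\partial_t+\ub\cdot\nabla$ to both components of $\rho\dot u^j=\partial_j F$. Using $\dot\rho=-\rho\,\div\ub$ from the continuity equation and the commutator identity $(\partial_j F)^{\cdot}=\partial_j \dot F-(\partial_j u^k)\partial_k F$, this yields
\begin{equation*}
\rho\ddot u^j = \partial_j \dot F - (\partial_j u^k)\partial_k F + \rho(\div\ub)\dot u^j.
\end{equation*}
Multiplying by $\dot u^j$, integrating over $\Omega_t$, and using $\tfrac{1}{2}\tfrac{d}{dt}\!\int \rho|\dot\ub|^2 d\xb=\int \rho\dot u^j\ddot u^j d\xb$, I would integrate by parts in the leading term. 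The boundary contribution vanishes because $F\equiv 0$ on $\Gamma_t$ (the stress-free condition) combined with the Lagrangian property $a'(t)=u(a(t),t)$ immediately gives $\dot F=F_t+uF_r=0$ on $\Gamma_t$.

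Next, I would produce the dissipation via the algebraic identity obtained from $F=(2\mu+\rho)\div\ub-P$ and the continuity equation:
\begin{equation*}
\dot F = (2\mu+\rho)\div\dot\ub - (2\mu+\rho)(\partial_j u^k)(\partial_k u^j) - \rho(\div\ub)^2 + \gamma P\,\div\ub.
\end{equation*}
Substituting this into $-\!\int (\div\dot\ub)\dot F\,d\xb$ generates the desired coercive term $\int (2\mu+\rho)|\div\dot\ub|^2 d\xb$ on the left, together with a collection of lower-order remainders of the schematic form $\int (2\mu+\rho)(\div\dot\ub)|\nabla\ub|^2 d\xb$, $\int (\div\dot\ub)\rho|\div\ub|^2 d\xb$, $\int P(\div\ub)(\div\dot\ub)d\xb$, and $\int \dot u^j(\partial_j u^k)(\partial_k F)d\xb$ on the right.

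Each remainder is then controlled by Cauchy-Schwarz, absorbing the top-order factor into $\tfrac{1}{4}\int(2\mu+\rho)|\div\dot\ub|^2 d\xb$, and the remaining $|\nabla\ub|^4$ or $|F|^2|\nabla\ub|^2$ factors are handled via the uniform density bound from Lemma \ref{lem 4.3}, the previous estimate \eqref{5-2}, the higher integrability \eqref{high integrability}, and the Gagliardo-Nirenberg/Beal-Kato-Majda-type estimate (Lemma \ref{lem 2.3}) to interpolate $\|\nabla\ub\|_{L^4}$ against $\|\div\ub\|_{L^2}$ and $\|\nabla^2\ub\|_{L^q}$. For the nontrivial cross term $\int \dot u^j (\partial_j u^k)(\partial_k F)d\xb$, I would integrate by parts once more, transferring a derivative onto $\dot\ub$ to recover another $\div\dot\ub$ factor (exploiting $\curl\ub=0$), then repeat the Cauchy-Schwarz absorption. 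Combined, this yields
\begin{equation*}
\tfrac{d}{dt}\!\int\rho|\dot\ub|^2 d\xb + \int (2\mu+\rho)|\div\dot\ub|^2 d\xb \le C\bigl(1+\|\div\ub\|_{L^2(\Omega_t)}^2+\|F\|_{L^\infty}^{\alpha}\bigr)\!\int\rho|\dot\ub|^2 d\xb + C,
\end{equation*}
from which Gronwall plus the energy inequality \eqref{energy inequality} closes the estimate.

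\textbf{Main obstacle.} The hard part is the interior term coupling $F$ and $|\nabla\ub|^2$. Unlike the previous proposition, a cutoff is no longer needed since the boundary contribution $\dot F|_{\Gamma_t}=0$ is exact, so the delicate BMO-$\mathcal{H}^1$ machinery used for the effective-viscous-flux estimate can now be bypassed. The subtle point is controlling $\int(2\mu+\rho)(\partial_j u^k)(\partial_k u^j)\div\dot\ub\,d\xb$: a naive Cauchy-Schwarz requires $\|\nabla\ub\|_{L^4}^2$, which is exactly borderline in 2D. Here I would appeal to the identity $|\nabla\ub|^2=|\div\ub|^2$ (valid because $\curl\ub=0$ for radial flow) together with $\div\ub=(F+P)/(2\mu+\rho)$ and the uniform $L^\infty$ bound on $\rho$ to reduce this factor to $L^4$ norms of $F$, which are controlled via $\|F\|_{L^4}\lesssim\|F\|_{L^2}^{1/2}\|\nabla F\|_{L^2}^{1/2}\lesssim\|F\|_{L^2}^{1/2}\|\sqrt\rho\,\dot\ub\|_{L^2}^{1/2}$ using $\nabla F=\rho\dot\ub$. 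This gives enough room to absorb the top-order pieces and close Gronwall.
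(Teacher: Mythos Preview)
Your overall strategy—applying $D/Dt$ to $\rho\dot u^j=\partial_jF$, producing the dissipation $\int(2\mu+\rho)|\div\dot\ub|^2$, and closing by Gronwall—is sound in outline, and your observation that $\dot F\equiv 0$ on $\Gamma_t$ (since $F(a(t),t)=0$ and $a'=u$) is correct and kills the free-boundary contribution cleanly.

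The gap is the pointwise identity ``$|\nabla\ub|^2=|\div\ub|^2$ (valid because $\curl\ub=0$).'' This is false. For a radial field in two dimensions $|\nabla\ub|^2=(u_r)^2+(u/r)^2$ while $|\div\ub|^2=(u_r+u/r)^2$; curl-freeness only gives the algebraic relation $|\nabla\ub|^2=|\div\ub|^2-2\det(\nabla\ub)$, with $\det(\nabla\ub)=u_r\cdot u/r$. So after your Cauchy--Schwarz step you cannot simply reduce the quartic remainder to $\|\div\ub\|_{L^4}^4$; a residual Jacobian term survives. And because $\ub\cdot\nb\neq 0$ on $\Gamma_t$, the global div--curl bound $\|\nabla\ub\|_{L^4}\le C\|\div\ub\|_{L^4}$ is not available directly either. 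This is exactly why the paper retains the cutoff decomposition from the previous proposition rather than discarding it: in the interior it works with $\zeta\ub$, which is compactly supported and curl-free so that Calder\'on--Zygmund yields $\|\nabla(\zeta\ub)\|_{L^4}\le C\|\div(\zeta\ub)\|_{L^4}$, while in the annulus $\{r\ge\alpha\}$ it passes to Lagrangian coordinates with $\chi$ and uses $|u/r|\le\|u\|_\infty/\alpha\le C$ pointwise. Hence your claim that ``a cutoff is no longer needed'' is neither what the paper does nor justified by the argument you gave. Your global-Eulerian route can probably be repaired—for instance via the $W^{2,4}$ Neumann estimate for the potential $\phi$ with $\ub=\nabla\phi$, noting that the boundary datum $\partial_n\phi=u(a(t))$ is a single constant bounded by $\|\div\ub\|_{L^2}$ and that $a(t)$ is bounded above and below—but that is a different argument from the identity you invoked.
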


\begin{proof}
Differentiating the equation $\eqref{5-1}_2$ with respect to $\tau$, we obtain
\begin{align*}
\frac{1}{r}u_{\tau \tau}-\frac{1}{r^2}u_\tau u+(\rho^\gamma-(2\mu+\rho)\rho(ru)_x)_{x\tau}=0
\end{align*}
Multiplying the above equation by $ru_\tau\chi^2$ and integrating $x$ over $[0,1]$, we have
\begin{align}
\begin{split}
&\quad \frac{1}{2}\frac{d}{d\tau}\int_0^1u_\tau^2\chi^2dx+\int_0^1((2\mu+\rho)\rho(ru)_x-\rho^\gamma)_\tau\chi^2u_{x\tau}rdx\\
&=\int_0^1\frac{1}{r}u_\tau^2u\chi^2dx-\int_0^1((2\mu+\rho)\rho(ru)_x-\rho^\gamma)_\tau(2\chi\chi'u_\tau r+u_\tau\chi^2r_x)dx.
\end{split}
\end{align}
Combining the following facts
\begin{align*}
\begin{split}
\norm{u}_{L^\infty(0,1)}&\leq C\norm{\div \mathbf{u}}_{L^2(\Omega_t)}\leq C\norm{F}_{L^2(\Omega_t)}\leq C\\
\norm{\rho_\tau}_{L^2(0,1)}&=\norm{\rho^2(ru)_x}_{L^2(0,1)}\leq C\\
\norm{u_\tau \chi}_{L^\infty(0,1)}&\leq C\norm{u_\tau \chi}_{L^1(0,1)}+C\norm{u_{\tau x}\chi+u_\tau\chi'}_{L^1(0,1)}\\
&\leq C\norm{u_\tau}_{L^2(0,1)}+C\norm{\sqrt{\rho}u_{x\tau}\chi}_{L^2(0,1)}\\
\norm{F\chi}_{L^\infty(0,1)}
&\leq C\norm{F\chi}_{L^1(0,1)}+C\norm{F_x\chi+F\chi'}_{L^1(0,1)}\\
&\leq C\norm{F}_{L^2(0,1)}+C\norm{u_\tau\chi}_{L^2((0,1))},
\end{split}
\end{align*}
we can obtain 
\begin{align*}
\begin{split}
&\quad\frac{d}{d\tau}\int_0^1(u_\tau^2+\gamma\rho^{\gamma+1})|(ru)_x|^2)\chi^2dx+\int_0^1(2\mu+\rho)\rho u_{x\tau}^2r\chi^2dx\\
&\leq C\left(1+\norm{u_\tau}_{L^2(0,1)}^2+\norm{F}_{L^2(0,1)}^2\right).
\end{split}
\end{align*}
By Gronwall's inequality, we obtain
\begin{align}\label{5-15}
&\quad\sup_{0\leq \tau\leq T}\int_0^1(u_\tau^2+\gamma\rho^{\gamma+1}|(ru)_x|^2)\chi^2dx+\int_0^T\int_0^1(2\mu+\rho)\rho u_{x\tau}^2r\chi^2dxd\tau\leq C.
\end{align}

Next, we deal with the interior estimate in Eulerian coordinates. Operating $\zeta^2\dot{\mathbf{u}}_j[\partial/\partial_t+\div(\mathbf{u}\cdot)]$ to both side of $\eqref{CNS}_2$ and integrating the resulted equations by parts, we have
\begin{align*}
\begin{split}
&\quad\dfrac{1}{2}\dfrac{d}{dt}\int_{\Omega_t}\rho|\dot{\mathbf{u}}|^2\zeta^2dx+\int_{\Omega_t}(2\mu+\rho)|\div\dot{\mathbf{u}}|^2\zeta^2dx\\
&\leq C+C\norm{\nabla\mathbf{u}}_{L^4(\Omega_t)}^4+C\norm{\nabla\mathbf{u}}_{L^2(\Omega_t)}^2+\frac{\mu}{2}\norm{\zeta\div\dot{\mathbf{u}}}_{L^2(\Omega_t)}^2\\
&\leq C+C\int_{\Omega_t}\rho|\dot{\mathbf{u}}|^2\zeta^2dx+C\norm{\div\mathbf{u}}_{L^2(\Omega_t)}^2+\frac{\mu}{2}\norm{\zeta\div\dot{\mathbf{u}}}_{L^2(\Omega_t)}^2,
\end{split}
\end{align*}
where we used
\begin{align*}
\begin{split}
\norm{\nabla\mathbf{u}}_{L^4(\Omega_t)}^4&\leq \norm{\nabla(\zeta\mathbf{u})}_{L^4(\Omega_t)}^4+\norm{(1-\zeta)\nabla\mathbf{u}-\nabla\zeta\mathbf{u}}_{L^4(\Omega_t)}^4\\
&\leq C+\norm{\div (\zeta\mathbf{u})}_{L^4(\Omega_t)}^4+\norm{(1-\zeta)F}_{L^4(\Omega_t)}^4\\
&\leq C+\norm{\zeta\div \mathbf{u}}_{L^4(\Omega_t)}^4\leq C+\norm{\zeta F}_{L^4(\Omega_t)}^4\\
&\leq C+\norm{\zeta F}_{L^2(\Omega_t)}^2\norm{\nabla(\zeta F)}_{L^2(\Omega_t)}^2\\
&\leq C+C\norm{\zeta\nabla F}_{L^2(\Omega_t)}^2\leq C+C\int_{\Omega_t}\rho|\dot{\mathbf{u}}|^2\zeta^2dx.
\end{split}
\end{align*}
Then by Gronwall's inequality, we obtain 
\begin{align}\label{5-16}
\sup_{0\leq t\leq T}\int_{\Omega_t}\rho|\dot{\mathbf{u}}|^2\zeta^2dx+\int_0^T\int_{\Omega_t}(2\mu+\rho)|\div\dot{\mathbf{u}}|^2\zeta^2dxdt\leq C.
\end{align}
We complete the proof with \eqref{5-15} and \eqref{5-16}.
\end{proof}

In the final proposition, we will show the higher order derivative estimates of the density and velocity, which guarantee the local strong solution can be extended to a global one.
\begin{prop}
For any $p>2$, There exists a constant $C$ depending on $\gamma,T,\xb_0$ and initial data such that the following estimate holds:
\begin{equation}
\label{rho estimate}
	\sup_{t\in [0,T]}\br{ \norm{\rho}_{L^1(\Omega_t)\cap W^{1,p}(\Omega_t)} +\norm{\ub}_{H^2(\Omega_t)} }+\int_0^T \br{\norm{\nabla \ub }_{L^\infty(\Omega_t)}^2+\norm{\ub_t}_{H^1(\Omega_t)}^2 }dt\le C.
\end{equation}
\end{prop}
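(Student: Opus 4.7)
My plan rests on four ingredients: (i) the identity $\nabla F=\rho\dot{\ub}$, a consequence of radial symmetry (which forces $\curl\ub\equiv0$) applied to the momentum equation; (ii) elliptic regularity for $\ub$ via the relation $(2\mu+\rho)\div\ub=F+P$; (iii) the Beal--Kato--Majda inequality (Lemma~\ref{lem 2.3}); and (iv) a logarithmic Gronwall on the continuity equation. The preceding propositions supply $\sqrt{\rho}\dot{\ub}\in L^\infty_tL^2_x$, $\sqrt{2\mu+\rho}\,\div\dot{\ub}\in L^2_tL^2_x$, $F\in L^\infty_tL^2_x$, and $\norm{\rho}_{L^\infty_t L^\infty_x}\le C$ via Lemma~\ref{lem 4.3}; these are the inputs to the bootstrap.

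First I would upgrade $\dot{\ub}$. Since $2\mu+\rho\ge 2\mu$, \eqref{U estimate} gives $\div\dot{\ub}\in L^2_tL^2_x$, and because $\dot{\ub}$ is radial with $\dot{\ub}(0,t)=0$, the Hardy-type lemma of Section~\ref{sec 2} applied to $\dot{\ub}$ upgrades this to $\norm{\dot{\ub}(t)}_{L^\infty(\Omega_t)}\le \norm{\div\dot{\ub}(t)}_{L^2(\Omega_t)}$, hence $\dot{\ub}\in L^2_tL^q_x$ for every $q\in[2,\infty]$. Decomposing the momentum equation in 2D via $\Delta\ub=\nabla\div\ub-\nabla^\perp\curl\ub$ and using $\curl\ub=0$ yields
\begin{equation*}
\rho\dot{\ub}=\nabla F-\mu\nabla^\perp\curl\ub=\nabla F,
\end{equation*}
so $\norm{\nabla F}_{L^q}\le C\norm{\dot{\ub}}_{L^q}$. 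Combined with $F\in L^\infty_tL^2_x$, the 2D Morrey embedding $W^{1,q}\hookrightarrow L^\infty$ for $q>2$ gives $F\in L^2_tL^\infty_x$; since $\div\ub=(F+P)/(2\mu+\rho)$ with $\rho$ bounded, also $\norm{\div\ub}_{L^\infty(\Omega_t)}\in L^2_t$.

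The heart of the argument is the logarithmic Gronwall. Differentiating the continuity equation $\rho_t+\ub\cdot\nabla\rho+\rho\div\ub=0$ and pairing with $\abs{\nabla\rho}^{p-2}\nabla\rho$ gives
\begin{equation*}
\tfrac{d}{dt}\norm{\nabla\rho}_{L^p}\le C\norm{\nabla\ub}_{L^\infty}\norm{\nabla\rho}_{L^p}+C\norm{\nabla\div\ub}_{L^p}.
\end{equation*}
Differentiating $(2\mu+\rho)\div\ub=F+P$ and using $\Delta\ub=\nabla\div\ub$ in the curl-free radial case yields the elliptic bound
\begin{equation*}
\norm{\nabla^2\ub}_{L^p}\le C\norm{\nabla F}_{L^p}+C\br{1+\norm{\div\ub}_{L^\infty}}\norm{\nabla\rho}_{L^p},
\end{equation*}
and Lemma~\ref{lem 2.3} with $\curl\ub=0$ reads
\begin{equation*}
\norm{\nabla\ub}_{L^\infty}\le C\br{1+\norm{\div\ub}_{L^\infty}}\log\br{e+\norm{\nabla\rho}_{L^p}}+C\norm{\nabla\ub}_{L^2}+C.
\end{equation*}
Substituting back produces a differential inequality of the form $\tfrac{d}{dt}\Phi\le Cg(t)(1+\Phi)\log(e+\Phi)$ with $\Phi=\norm{\nabla\rho}_{L^p}$ and $g=1+\norm{\div\ub}_{L^\infty}^2\in L^1_t$, which closes by Gronwall to give $\sup_t\norm{\nabla\rho}_{L^p}\le C$.

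The main obstacle will be making the elliptic estimate rigorous uniformly up to the free boundary, where $\rho$ may vanish and the Lamé system degenerates; following the strategy of the preceding propositions I would patch a Lagrangian-coordinate estimate near $\Gamma_t$ (where \eqref{5-15} already controls $u_{x\tau}$) with an Eulerian interior estimate using the cutoff $\zeta$ and Lemma~\ref{lem 4.3} to keep $\rho$ bounded below away from the boundary. Once $\norm{\nabla\rho}_{L^p}$ is bounded, the elliptic estimate at $p=2$ combined with $\norm{\nabla F}_{L^2}=\norm{\rho\dot{\ub}}_{L^2}\le C$ yields $\sup_t\norm{\ub}_{H^2}\le C$; the integrability $\norm{\nabla\ub}_{L^\infty}^2\in L^1_t$ is immediate from BKM and the $L^2_t$ control of $\norm{\div\ub}_{L^\infty}$; and $\norm{\ub_t}_{H^1}^2\in L^1_t$ follows from the identity $\ub_t=\dot{\ub}-\ub\cdot\nabla\ub$, the 2D Sobolev product rule applied to the quadratic term, and the established $\dot{\ub}\in L^2_tH^1_x$.
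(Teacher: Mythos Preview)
Your proposal is correct and follows essentially the same route as the paper: differentiate the continuity equation, obtain a transport inequality for $\norm{\nabla\rho}_{L^p}$, feed $\norm{\nabla\ub}_{L^\infty}$ back via Beal--Kato--Majda with $\curl\ub=0$, and close by a logarithmic Gronwall.

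There are two minor differences worth noting. First, to control $\norm{\dot{\ub}}_{L^p}$ you invoke the radial Hardy-type lemma of Section~\ref{sec 2} applied to $\dot{\ub}$, whereas the paper instead appeals to Lemma~\ref{lem 2.4} (the weighted Poincar\'e inequality) to bound $\norm{\dot{\ub}}_{L^2}$ in terms of $\norm{\sqrt{\rho}\dot{\ub}}_{L^2}+\norm{\nabla\dot{\ub}}_{L^2}$, and then interpolates. Your shortcut is slightly cleaner in the radial setting. Second, for the free-boundary issue you anticipate patching a Lagrangian estimate near $\Gamma_t$ with an interior Eulerian estimate. The paper in fact works entirely in Eulerian coordinates in this proposition: it applies BKM only to the compactly supported field $\zeta\ub$ (so Lemma~\ref{lem 2.3} is used on a fixed domain), and handles the remainder $(1-\zeta)\nabla\ub$ by the explicit radial formula $|\nabla\ub|^2=(\partial_r u)^2+(u/r)^2$, which on $\{r\ge\alpha\}$ is controlled directly by $\norm{F}_{L^\infty}$ and $\norm{u}_{L^\infty}$. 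The $\norm{\nabla^2\ub}_{L^2}$ bound is obtained by the same $\zeta/(1-\zeta)$ splitting. So no Lagrangian pass is needed here; the radial structure away from the origin already suffices.
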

\begin{proof}
Operating $\nabla$ to the mass equation $\eqref{CNS}_1$, multiplying the resulting equation by $q|\nabla\rho|^{p-2}\nabla\rho$, we get
\begin{align}\label{5-18}
\begin{split}
&(|\nabla\rho|^p)_t+\div(|\nabla\rho|^p\mathbf{u})+(p-1)|\nabla\rho|^p\div\mathbf{u}+p|\nabla\rho|^{p-2}\nabla\rho\cdot(\nabla\mathbf{u}\cdot\nabla\rho)\\
&\quad+p\rho|\nabla\rho|^{p-2}\nabla\rho\cdot\nabla\div\mathbf{u}=0.
\end{split}
\end{align}
Note that
\begin{align*}
\frac{d}{dt}\int_{\Omega_t}|\nabla\rho|^pdx=\int_{\Omega_t}\div(|\nabla\rho|^p\mathbf{u})dx+\int_{\Omega_t}\partial_t(|\nabla\rho|^p)dx.
\end{align*}
Integrating the equation \eqref{5-18} over $\Omega_t$ we get
\begin{align*}
\begin{split}
\dfrac{d}{dt}\norm{\nabla\rho}_{L^p(\Omega_t)}&\leq C\left( \norm{\nabla\rho}_{L^p(\Omega_t)}\norm{\div \mathbf{u}}_{L^\infty(\Omega_t)}+\norm{\nabla\rho}_{L^p(\Omega_t)}\norm{\nabla \mathbf{u}}_{L^\infty(\Omega_t)}\right.\\
&\quad+ \left.\norm{\nabla\rho}_{L^p(\Omega_t)}+\norm{\nabla\div\mathbf{u}}_{L^p(\Omega_t)}\right)\\
&\leq C\left(1+\norm{F}_{L^\infty(\Omega_t)}+\norm{\nabla\mathbf{u}}_{L^\infty(\Omega_t)}\right)\norm{\nabla\rho}_{L^p(\Omega_t)}+C\norm{\rho\dot{\mathbf{u}}}_{L^p(\Omega_t)}\\
&\leq 
C\left(1+\norm{F}_{L^2(\Omega_t)}^{\frac{p-2}{2p-2}}\norm{\nabla F}_{L^p(\Omega_t)}^{\frac{p}{2p-2}}+\norm{\nabla\mathbf{u}}_{L^\infty(\Omega_t)}\right)\norm{\nabla\rho}_{L^p(\Omega_t)}+C\norm{\rho\dot{\mathbf{u}}}_{L^p(\Omega_t)}\\
&\leq 
C\left(1+\norm{\dot{\mathbf{u}}}_{L^p(\Omega_t)}^{\frac{p}{2p-2}}+\norm{\nabla\mathbf{u}}_{L^\infty(\Omega_t)}\right)\norm{\nabla\rho}_{L^p(\Omega_t)}+C\norm{\rho\dot{\mathbf{u}}}_{L^p(\Omega_t)}\\
&\leq C\left(1+\norm{\div\dot{\mathbf{u}}}_{L^2(\Omega_t)}+\norm{\nabla\mathbf{u}}_{L^\infty(\Omega_t)}\right)\norm{\nabla\rho}_{L^p(\Omega_t)}+C\norm{\rho\dot{\mathbf{u}}}_{L^p(\Omega_t)},
\end{split}
\end{align*}
which we used Lemma \ref{lem 2.4} to obtain that
\begin{align*}
\begin{split}
\norm{\dot{\mathbf{u}}}_{L^p(\Omega_t)}&\leq C\norm{\dot{\mathbf{u}}}_{L^2(\Omega_t)}+C\norm{\dot{\mathbf{u}}}_{L^2(\Omega_t)}^{\frac{2}{p}}\norm{\nabla\dot{\mathbf{u}}}_{L^2(\Omega_t)}^{1-\frac{2}{p}}\\
&\leq C(\norm{\sqrt{\rho}\dot{\mathbf{u}}}_{L^2(\Omega_t)}+\norm{\nabla\dot{\mathbf{u}}}_{L^2(\Omega_t)})\\
&\leq C(1+\norm{\div \dot{\mathbf{u}}}_{L^2(\Omega_t)}).
\end{split}
\end{align*}
Next, by standard $L^p$-estimate for elliptic system and Lemma \ref{lem 2.3}, we get
\begin{align*}
\begin{split}
\norm{\nabla\mathbf{u}}_{L^\infty(\Omega_t)}&\leq\norm{\nabla\mathbf{(\zeta \mathbf{u})}}_{L^\infty(\Omega_t)}+\norm{(1-\zeta)\nabla\mathbf{u}+\nabla\zeta\mathbf{u}}_{L^\infty(\Omega_t)}\\
&\leq C(1+\norm{\div \dot{\mathbf{u}}}_{L^2(\Omega_t)})\\
&\quad+C\norm{\div(\zeta\mathbf{u})}_{L^\infty(\Omega_t)}\log(e+\norm{\nabla^2(\zeta\mathbf{u})}_{L^p(\Omega_t)})+C\norm{\nabla(\zeta\mathbf{u})}_{L^2(\Omega_t)}\\
&\leq C(1+\norm{\div \dot{\mathbf{u}}}_{L^2(\Omega_t)})\\
&\quad+C(1+\norm{F}_{L^\infty(\Omega_t)})\log(e+\norm{\nabla\div(\zeta\mathbf{u})}_{L^p(\Omega_t)})+C\norm{\div \mathbf{u}}_{L^2(\Omega_t)}\\
&\leq C(1+\norm{\div \dot{\mathbf{u}}}_{L^2(\Omega_t)})\\
&\quad+C\left(1+\norm{F}_{L^2(\Omega_t)}^{\frac{p-2}{2p-2}}\norm{\nabla F}_{L^p(\Omega_t)}^{\frac{p}{2p-2}}\right)\log(e+\norm{\nabla\div(\zeta\mathbf{u})}_{L^p(\Omega_t)})+C\norm{\div \mathbf{u}}_{L^2(\Omega_t)}\\
&\leq C(1+\norm{\div \dot{\mathbf{u}}}_{L^2(\Omega_t)})+C(1+\norm{\div \dot{\mathbf{u}}}_{L^2(\Omega_t)}^{\frac{p}{2p-2}})\log(e+\norm{\nabla F}_{L^p(\Omega_t)}+\norm{\nabla\rho}_{L^p(\Omega_t)})\\
&\leq C(1+\norm{\div \dot{\mathbf{u}}}_{L^2(\Omega_t)})\log(e+\norm{\nabla\rho}_{L^p(\Omega_t)}),
\end{split}
\end{align*}
which we used 
\begin{align*}
\begin{split}
\norm{\nabla\zeta \mathbf{u}}_{L^\infty(\Omega_t)}&\leq C\norm{\div \mathbf{u}}_{L^2(\Omega_t)}\leq C\norm{F}_{L^2(\Omega_t)}+C\leq C
\end{split}
\end{align*}
and
\begin{align*}
\begin{split}
\norm{(1-\zeta)\nabla\mathbf{u}}_{L^\infty(\Omega_t)}&=\norm{(1-\zeta)\sqrt{(\partial_ru)^2+\left(\frac{u}{r}\right)^2}}_{L^\infty(\Omega_t)}\leq C+C\norm{(1-\zeta)(\partial_ru)}_{L^\infty(\Omega_t)}\\
&\leq C+C\norm{(1-\zeta)\sqrt{\left(\partial_ru+\frac{u}{r}\right)^2}}_{L^\infty(\Omega_t)}\leq C+C\norm{(1-\zeta)F}_{L^\infty(\Omega_t)}\\
&\leq C+C\norm{F}_{L^2(\Omega_t)}^{\frac{p-2}{2p-2}}\norm{\nabla F}_{L^p(\Omega_t)}^{\frac{p}{2p-2}}\leq C+C\norm{\div \dot{\mathbf{u}}}_{L^2(\Omega_t)}.
\end{split}
\end{align*}
Thus, by Gronwall's inequality, we get
\begin{align*}
\sup_{0\leq t\leq T}\norm{\nabla\rho}_{L^p(\Omega_t)}\leq C.
\end{align*}
Finally, we have
\begin{align*}
\begin{split}
\norm{\nabla^2\mathbf{u}}_{L^2(\Omega_t)}&\leq\norm{\nabla^2(\zeta \mathbf{u})}_{L^2(\Omega_t)}+\norm{(1-\zeta)\nabla^2\mathbf{u}}_{L^2(\Omega_t)}+\norm{\nabla\zeta\nabla\mathbf{u}}_{L^2(\Omega_t)}+C\\
&\leq C\norm{\nabla\div(\zeta\mathbf{u})}_{L^2(\Omega_t)}+C\left(1+\norm{(1-\zeta)\nabla F}_{L^2(\Omega_t)}\right.\\
& \left.\quad +\norm{(1-\zeta)\nabla\rho}_{L^2(\Omega_t)}+\norm{F}_{L^2(\Omega_t)}\right)\\
&\leq C+C\norm{\zeta\nabla\div\mathbf{u}}_{L^2(\Omega_t)}+C\norm{\nabla(\mathbf{u}\cdot \nabla\zeta)}_{L^2(\Omega_t)}\\
&\leq C,
\end{split}
\end{align*}
which we used the fact
\begin{align*}
\begin{split}
&\quad\norm{(1-\zeta)\nabla^2\mathbf{u}}_{L^2(\Omega_t)}\\
&\leq C\norm{(1-\zeta)\partial_{rr}u}_{L^2(\Omega_t)}+C\norm{(1-\zeta)\partial_{r}u}_{L^2(\Omega_t)}+C\norm{(1-\zeta)u}_{L^2(\Omega_t)}\\
&\leq C\norm{(1-\zeta)\nabla(\div\mathbf{u})}_{L^2(\Omega_t)}+C\norm{(1-\zeta)\nabla\rho}_{L^2(\Omega_t)}+C\norm{(1-\zeta)\div\mathbf{u}}_{L^2(\Omega_t)}+C\\
&\leq C\left(1+\norm{(1-\zeta)\nabla F}_{L^2(\Omega_t)} +\norm{(1-\zeta)\nabla\rho}_{L^2(\Omega_t)}+\norm{F}_{L^2(\Omega_t)}\right).
\end{split}
\end{align*}
\end{proof}

The remained procedure is standard: we can construct global approximate solutions to the stress-free boundary problem, and then show the convergence to a solution of the original system  based on the above a priori estimates. 

Here we give the key approximated scheme as follows: The approximation of Lagrangian system is constructed as
\begin{equation*}
	\begin{dcases}
		\rho_\tau^{(k+1)} +\br{\rho^{(k+1)} }^2 \br{r^{(k)} u^{(k)} }_x=0,\\
		\frac{1}{r^{(k)}} u_\tau^{(k+1) } +\br{(\rho^{(k)} )^\gamma -(2\mu+\rho^{(k)})\rho^{(k)}\br{r^{(k)}u^{(k+1)} }_x }_x=0,
	\end{dcases}
\end{equation*}
where $\frac{d}{d\tau} r^{(k)}(x,\tau)=u^{(k)}(x,\tau) $. Then we impose initial data
\begin{equation*}
		\br{\rho^{(k+1)}, u^{(k+1)} }(x,0)=(\rho_0^{\epsilon}, u_0^\epsilon)=\br{\frac{\rho_0+\epsilon}{\int_0^{a_0}(\rho_0 +\epsilon ) rdr}, \frac{\rho_0u_0}{\rho_0+\epsilon } \int_0^{a_0}(\rho_0 +\epsilon ) rdr},
\end{equation*}
and the boundary condition
\begin{equation*}
	u^{(k+1)}(0,\tau)=0, \br{(\rho^{(k)} -(2\mu+\rho^{(k)})\rho^{(k)}\br{r^{(k)}u^{(k+1)} }_x }(1,\tau)=0.
\end{equation*}
Using the contraction mapping theorem and energy estimates, we can prove the global existence of the solution of above approximated system. Since a priori estimates are uniform with respect to $\epsilon$ and $k$, we can show the convergence of the approximated solutions. It is direct to check the limit is exactly the unique global classic solution of the original system. The proof of Theorem \ref{global existence} is finished.

\section*{Ackonwledgments}
X.-D. Huang is partially supported by CAS Project for Young Scientists in Basic Research,
Grant No. YSBR-031 and NNSFC Grant Nos. 11688101.

\end{document}